\DeclareSymbolFont{bbold}{U}{bbold}{m}{n}
\DeclareSymbolFontAlphabet{\mathbbm}{bbold}
\title{Trace spaces of counterexamples to Naimark's problem}
\theoremstyle{plain}
	\newtheorem{theorem*}{Theorem}
	\newtheorem{theorem}{Theorem}[section]
	\newtheorem{thrm}{Theorem}[section]
	\newtheorem{proposition}[theorem]{Proposition}
	\newtheorem{lemma}[theorem]{Lemma}
	\newtheorem{corollary}[theorem]{Corollary}
	\newtheorem{claim}{Claim}[theorem]
\theoremstyle{definition}
	\newtheorem*{NP*}{Naimark's problem}
	\newtheorem*{cpou*}{Complemented Partitions of Unity}
	\newtheorem*{dia*}{The diamond principle ($\diamondsuit$)}
	\newtheorem{definition}[theorem]{Definition}
	\newtheorem{question}[theorem]{Question}
	\newtheorem*{acknow}{Acknowledgements}
\theoremstyle{remark}
	\newtheorem{remark}[theorem]{Remark}
\newcommand{\C}{\mathbb{C}}
\newcommand{\N}{\mathbb{N}}
\newcommand{\A}{A}
\newcommand{\cstar}{$\mathrm{C}^\ast$}
\newcommand{\set}[1]{\{#1\}}					
\newcounter{my_enumerate_counter}
\newcommand{\pushcounter}{\setcounter{my_enumerate_counter}{\value{enumi}}}
\newcommand{\popcounter}{\setcounter{enumi}{\value{my_enumerate_counter}}}
\renewcommand{\phi}{\varphi} 
\titleformat*{\section}{\centering\large\scshape}
\author{Andrea Vaccaro}
\date{}
\begin{document}
\maketitle
\begin{abstract}
A counterexample to Naimark's problem is a \cstar-algebra
that is not isomorphic to the algebra of compact operators on some Hilbert space,
yet still has only one irreducible representation up to unitary equivalence.
It is well-known that such algebras must be nonseparable,
and in 2004 Akemann and Weaver used the diamond principle
(a set theoretic axiom independent from \textsf{ZFC}) to give the first counterexamples.
For any such counterexample $A$, the unitary group $\mathcal{U}(A)$
acts transitively on the pure states,
which are the extreme points of the state space $S(A)$.
It is conceivable that this implies (as happens for finite-dimensional simplexes)
that the action of $\mathcal{U}(A)$ on $S(A)$ has at
most one fixed point, i.e. $A$ has at most one trace.
We give a strong negative answer here assuming diamond. In particular, 
we adapt the Akemann-Weaver construction to show that the trace space
of a counterexample to Naimark's problem can be affinely homeomorphic to
any metrizable Choquet simplex, and can also be nonseparable.
\end{abstract}

\keywords

\section{Introduction} \label{sctn:1}
In 1948, in \cite{Naimark1}, Naimark observed that the algebra of compact
operators $K(H)$ has a unique irreducible representation up to unitary equivalence
(the identity representation).
A few years later, in \cite{Naimark2}, he
asked whether this property characterizes $K(H)$ up to isomorphism.
The following question is known as \emph{Naimark's problem}.
\begin{NP*}
Let $A$ be a \cstar-algebra with
only one irreducible representation up to unitary equivalence. Is $A \cong K(H)$
for some Hilbert space $H$?
\end{NP*}

In the years immediately after Naimark posed his question, it was shown that the problem
has a positive answer for the classes of type I and separable \cstar-algebras
(see \cite[Theorem 7.3]{kaplansky} and \cite[Theorem 4]{rosenberg}). More recently,
a positive answer to the problem was proved also for certain graph \cstar-algebras (see \cite{graph}).

Therefore, a \emph{counterexample to Naimark's problem} would have to be a simple, non-type I,
nonseparable \cstar-algebra with a unique equivalence class of irreducible
representations. In 2004 Akemann and Weaver
were able to produce, with the extra set-theoretic axiom known as $\diamondsuit$,
a unital example of such algebras (see \cite{AW}). We remark that it is still not known
whether a positive answer to Naimark's problem is consistent with \textsf{ZFC}.

The general motivation of our inquiry is to investigate further the structural
and \cstar-algebraic properties that counterexamples
to Naimark's problem can (or cannot) satisfy,
other than those mentioned above. In this paper we focus on the study
of trace spaces, led by the following general observation regarding group actions on compact convex sets,
which initially seemed to suggest some kind of limitation on the size
of tracial simplexes
of counterexamples to Naimark's problem. Before going any further, we remark that
the original construction of the counterexamples given by Akemann and Weaver does not
provide any immediate information on the tracial simplex of these algebras, except maybe
that there both counterexamples with no traces and counterexamples whose trace space
is non-empty (see proposition \ref{prop:0} and the paragraph after corollary \ref{corollary:0}).

Let $K$ be a compact convex set and $G$ a group of affine homeomorphisms of $K$
and consider the action
\begin{align*}
\Theta: G \times K &\to K \\
(g, x) &\mapsto g(x)
\end{align*}
Assume moreover that the action is transitive when restricted to the set of extreme points.
It is conceivable that the set of the points in $K$ fixed by the action has size no bigger than
one (as happens if $K$ is a finite-dimensional simplex)\footnote{In this case, if there are at least
two points fixed by $\Theta$, we can find a
point $y = \sum_{k \le n} \lambda_k x_k$ such that $g(y)=y$ for all $g \in G$, and
$\lambda_i \not= \lambda_j$ for some $i \not= j$,
where $\set{x_1,\dots, x_n}$ are affinely independent extremal points of $K$. However,
for any $g \in G$ such that
$g(x_i) = x_j$, we get $g(y) \not = y$.}. This relates to our context as follows.
In a unital counterexample to Naimark's problem $A$ there is a unique irreducible
representation modulo unitary equivalence. This implies, by \cite[Theorem 5.1.4]{murphy} and
an application of Kadison transitivity theorem (\cite[Theorem 5.2.2]{murphy}),
that the action of the unitary group on the state space of $A$
\begin{align*}
\Theta_A: \mathcal{U}(A) \times S(A) &\to S(A) \\
(u, \phi) &\mapsto \phi \circ \text{Ad}(u)
\end{align*}
is transitive on the pure states of $A$, namely the extreme points of $S(A)$.
Moreover, since the traces
are fixed by this action, according to the previous observation it may seem plausible
that a counterexample to Naimark's problem could have at most one trace.
This is not the case for general affine actions $\Theta$,
in fact there is no strict bound on the number of fixed points even for a separable $K$.
For instance, let $A$ be a separable simple unital \cstar-algebra and let $\text{AInn}(A)$
be the group of \emph{asymptotically inner} automorphisms of $A$, i.e, the group of all
$\alpha \in \text{Aut}(A)$ such that there exists a continuous path
$(u_t)_{t \in [0, \infty)} \subseteq \mathcal{U}(A)$ such that
$\alpha(a) = \lim_{t \to \infty} \text{Ad}(u_t) (a)$ for all $a \in A$.
Then, by the Kishimoto-Ozawa-Sakai theorem on the transitivity
of the action of automorphisms on the pure state space of a separable simple unital \cstar-algebra
in \cite{KOS}, the action
\begin{align*}
\Xi_A: \text{AInn}(A) \times S(A) &\to S(A) \\
(\alpha, \phi) &\mapsto \phi \circ \alpha
\end{align*}
is transitive on the extreme points of $S(A)$. On the other hand, since traces are fixed by inner
automorphisms, by continuity they are also fixed by the elements of $\text{AInn}(A)$.
Since every metrizable
Choquet simplex occurs as the trace space of some separable simple
unital \cstar-algebra (see \cite{blackAF}), we infer that the set
of fixed points in $\Xi_A$ can be considerably large.
The same is true for the unitary action $\Theta_A$ on the state space
of a counterexample to Naimark's problem, as is shown in the main result
of this paper.

\begin{theorem*} \label{thrm:1}
Assume $\diamondsuit$. Then the following holds:
\begin{enumerate}
\item For every metrizable Choquet simplex $X$ there is a counterexample to Naimark's problem whose trace
space is affinely homeomorphic to $X$.
\item There is a counterexample to Naimark's problem whose trace space is nonseparable.
\end{enumerate}
\end{theorem*}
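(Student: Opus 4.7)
The plan is to refine the Akemann--Weaver $\diamondsuit$-construction of a counterexample to Naimark's problem so that, at each successor stage, the tracial simplex of the algebra under construction is preserved. The tool that makes this possible is the Kishimoto--Ozawa--Sakai theorem highlighted in the introduction: for a separable simple unital $C^\ast$-algebra $A$, the action of the asymptotically inner automorphisms $\text{AInn}(A)$ on pure states is transitive, and every element of $\text{AInn}(A)$ fixes every trace pointwise (as a pointwise norm-limit of inner automorphisms, which trivially preserve traces). This turns the purely spatial procedure of conjugating pure states into one that is automatically compatible with the tracial data.

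For the initialization, in case (1) fix a separable simple unital $C^\ast$-algebra $B_{0}$ with $T(B_{0})$ affinely homeomorphic to $X$---which exists by Blackadar's theorem \cite{blackAF}---together with a pure state $\phi_{0}$ of $B_{0}$. In case (2), start from a simple unital $C^\ast$-algebra of density character $\aleph_{1}$ whose trace space is already nonseparable (constructed as an auxiliary $\omega_{1}$-length direct limit of Blackadar building blocks with pairwise incompatible trace simplexes). Using $\diamondsuit$, build a continuous increasing chain $(A_{\alpha})_{\alpha<\omega_{1}}$ of simple unital $C^\ast$-algebras with $A_{0}=B_{0}$, unions at limits, such that $\diamondsuit$ delivers at stage $\alpha$ a candidate pure state $\psi_{\alpha}$ of $A_{\alpha}$, using the standard diamond-coding of pure states as in \cite{AW}.

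The heart of the construction is the successor step: given $A_{\alpha}$, a distinguished pure extension $\phi_{\alpha}$ of $\phi_{0}$, and the guessed pure state $\psi_{\alpha}$, produce $A_{\alpha+1}\supseteq A_{\alpha}$ such that (a) there is a unitary $u\in A_{\alpha+1}$ with $\psi_{\alpha}=\phi_{\alpha}\circ\text{Ad}(u)$, and (b) the restriction map $T(A_{\alpha+1})\to T(A_{\alpha})$ is an affine homeomorphism. For (a), apply Kishimoto--Ozawa--Sakai to a separable elementary subalgebra of $A_{\alpha}$ containing all relevant data to obtain an asymptotically inner automorphism $\theta$ of $A_{\alpha}$ with $\psi_{\alpha}=\phi_{\alpha}\circ\theta$, realized as $\theta=\lim_{t}\text{Ad}(v_{t})$ for a norm-continuous path $(v_{t})$ of unitaries in $A_{\alpha}$. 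Then form $A_{\alpha+1}:=C^\ast(A_{\alpha},u)$ inside a suitable asymptotic sequence algebra $A_{\alpha}^{\infty}$, where $u=[v_{t}]$ is a single unitary implementing $\theta$ on $A_{\alpha}$. For (b), because $u$ is a genuine norm-limit of unitaries of $A_{\alpha}$, any trace $\tau\in T(A_{\alpha})$ extends uniquely to a trace on $A_{\alpha+1}$ via the limit-trace formula $\tilde{\tau}([a_{t}])=\lim_{t\to\mathcal{V}}\tau(a_{t})$ along an appropriate ultrafilter $\mathcal{V}$; the uniqueness of the extension---and thus the absence of spurious traces---is the crucial gain over a naïve construction.

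Setting $A:=\bigcup_{\alpha<\omega_{1}}A_{\alpha}$, the diamond bookkeeping guarantees that every pure state of $A$ is predicted at some stage, hence identified with $\phi_{0}$ inside $A$, yielding a unique equivalence class of irreducible representations. The algebra $A$ is simple, unital, and nonseparable, so $A\not\cong K(H)$. Finally, $T(A)$ is the inverse limit of the system of affine homeomorphisms $T(A_{\beta})\to T(A_{\alpha})$ for $\alpha<\beta$, and therefore $T(A)\cong T(B_{0})$, which equals $X$ in (1) and is nonseparable in (2). The central technical obstacle is (b): a plain crossed-product extension by $\theta$ would typically add many extra traces (for instance those arising from the dual torus action), so it is essential that $\theta$ be implemented by a unitary which is a genuine norm-limit of unitaries in $A_{\alpha}$, which is precisely what asymptotic innerness from KOS provides. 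A secondary bookkeeping point is ensuring that the $\diamondsuit$-coding predicts all pure states of the final nonseparable $A$ while each $A_{\alpha}$ stays separable at successors, so that KOS remains applicable.
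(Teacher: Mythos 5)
Your proposal correctly identifies the skeleton (the Akemann--Weaver/Farah--Hirshberg $\diamondsuit$-induction with KOS at successor stages, and the need to control trace extensions at each step), but the central mechanism you propose for step (b) does not work. Asymptotic innerness of $\theta$ means $\text{Ad}(v_t)(a)\to\theta(a)$ pointwise in norm; it does \emph{not} mean that the path $(v_t)$ converges in norm, so there is no ``single unitary $u$ which is a genuine norm-limit of unitaries in $A_\alpha$.'' Indeed, if such a $u$ existed it would lie in $A_\alpha$ (which is norm-closed) and $\theta=\text{Ad}(u)$ would be inner --- but then $C^\ast(A_\alpha,u)=A_\alpha$ and, more to the point, the crossed product trick that forces the guessed pure state to extend \emph{uniquely} (and hence become equivalent to $\phi_0$ upstairs) relies precisely on $\theta$ being outer. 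Replacing $A_\alpha\rtimes_\theta\mathbb{Z}$ by a subalgebra of an asymptotic sequence algebra also loses the pure-state extension properties (items 2--4 of Lemma \ref{lemma}) that drive the whole Naimark argument, and your ``limit-trace formula'' gives existence of a trace extension but not uniqueness. The actual obstruction is sharper than the dual torus action: by Thomsen's theorem the restriction $T(A_\beta\rtimes_\theta\mathbb{Z})\to T(A_\beta)$ is a homeomorphism if and only if every power $\theta^l$ is $\tau$-weakly outer for every extremal trace $\tau$, i.e.\ outer in the von Neumann algebra $\pi_\tau(A_\beta)''$. This is a genuinely stronger requirement than outerness in $A_\beta$, and the bulk of the paper (Theorem \ref{thrm:a}, Sections \ref{sctn:3}--\ref{sctn:4}) is devoted to re-proving KOS with this extra $\tau$-weak outerness built in, which is where nuclearity enters (Proposition \ref{prop:1A}). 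Your proposal contains no substitute for this step.

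Part (2) is also not addressed correctly: you cannot start the induction from a nonseparable $A_0$, since KOS (and the whole successor-step machinery) requires each $A_\alpha$ to be separable, and you flag but do not resolve this tension. The paper's route is different: it arranges that the restriction maps $T(A_{\beta+1})\to T(A_\beta)$ are \emph{non-injective} at cofinally many stages (by making $\theta$ $\tau$-weakly \emph{inner} for a prescribed countable set of traces, the second clause of Theorem \ref{thrm:a}), and then shows (Proposition \ref{prp1}) that an $\aleph_1$-chain with cofinally many branching trace extensions forces $T(A)$ to contain an uncountable discrete set, hence to be nonseparable.
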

In fact, we obtain the following stronger result.
\begin{theorem*} \label{thrm:2}
Assume $\diamondsuit$. For every metrizable Choquet simplex $X$ and
$1 \le n \le \aleph_0$ there is a \cstar-algebra $A$ such that
\begin{enumerate}
\item $A$ is simple, unital, nuclear and of density character $\aleph_1$,
\item $A$ is not isomorphic to its opposite algebra,
\item $A$ has exactly $n$ equivalence classes of pure states,
\item all automorphisms of $A$ are inner,
\item either of the following conditions can be obtained:
\begin{enumerate}
\item $T(A)$ is affinely homeomorphic to $X$. \label{clause1}
\item $T(A)$ is nonseparable. \label{clause2}
\end{enumerate}
\end{enumerate}
\end{theorem*}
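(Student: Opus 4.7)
The plan is to carry out a $\diamondsuit$-guided transfinite recursion of length $\omega_1$, producing a continuous chain $(A_\alpha)_{\alpha<\omega_1}$ of separable simple unital nuclear $C^*$-algebras whose inductive limit $A=\overline{\bigcup_{\alpha<\omega_1}A_\alpha}$ satisfies (1)--(5). This refines the Akemann--Weaver construction, with the decisive new feature that each unitary adjoined at a successor step is obtained from an asymptotically inner automorphism, so that the tracial simplex is rigidly controlled along the chain.

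First I would fix the base. For clause \ref{clause1}, Blackadar's realization theorem supplies a separable simple unital nuclear $C^*$-algebra $A_0$ with $T(A_0)$ affinely homeomorphic to $X$, and the recursion will preserve the tracial simplex exactly. For clause \ref{clause2} I would start from any convenient separable base and plan to insert, at cofinally many successor stages, tensor factors carrying prescribed new tracial data, so that at the end $T(A)$ has density character $\aleph_1$. In both cases I fix representatives $\phi_1,\dots,\phi_n$ in $A_0$ of the eventual equivalence classes of pure states. The $\diamondsuit$-sequence is used to anticipate, by stage $\alpha$, a pure state on $A_\alpha$, an automorphism of $A_\alpha$, and candidate data for a $*$-isomorphism $A_\alpha\to A_\alpha^{\mathrm{op}}$ to be obstructed later.

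At the successor step, given a predicted pure state $\psi$ on $A_\alpha$, I invoke the Kishimoto--Ozawa--Sakai theorem (available because $A_\alpha$ is separable, simple, unital and nuclear by induction) to obtain an asymptotically inner automorphism $\beta\in\mathrm{AInn}(A_\alpha)$ with $\psi\circ\beta=\phi_i$ for the bookkeeping-dictated $i\in\{1,\dots,n\}$. I then construct $A_{\alpha+1}$ as a separable simple nuclear superalgebra of $A_\alpha$ in which $\beta$ becomes inner, the predicted automorphism is forced to be inner, and the predicted isomorphism with the opposite algebra is destroyed. The crucial gain from choosing $\beta$ in $\mathrm{AInn}(A_\alpha)$ is that every $\tau\in T(A_\alpha)$ is $\beta$-invariant by continuity, and the extension $A_\alpha\subseteq A_{\alpha+1}$ can be arranged, via a crossed-product-type construction equipped with a faithful trace-preserving conditional expectation onto $A_\alpha$, so that the restriction map $T(A_{\alpha+1})\to T(A_\alpha)$ is an affine homeomorphism. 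For clause \ref{clause2} the trace-enlarging tensor insertions are stacked on top of this at cofinal $\alpha$ so as to strictly enlarge $T$. At limit stages one takes the closure of the union, and since every trace and every pure state of $A$ restricts to one on some $A_\alpha$, the system $\{T(A_\alpha)\}_\alpha$ determines $T(A)$ by inverse limit: in case (a) this yields $T(A)\cong T(A_0)\cong X$, while in case (b) the cofinal enlargements force the density character of $T(A)$ up to $\aleph_1$.

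The main obstacle is interleaving four largely independent requirements in a single $\diamondsuit$-driven recursion---exhausting all pure states (for (3)), all automorphisms (for (4)), and all candidate isomorphisms $A\to A^{\mathrm{op}}$ (for (2)), while simultaneously shaping $T(A)$ in the prescribed way (for (5))---and doing so while preserving simplicity, nuclearity, unitality, and separability of every $A_\alpha$. The technical heart of the argument is a one-step extension lemma: given a separable simple unital nuclear $C^*$-algebra, a predicted pure state, and an asymptotically inner automorphism equating that pure state with a reference one, produce a separable simple unital nuclear superalgebra in which the equation is witnessed by an inner unitary and the restriction map on traces is an affine homeomorphism, with room to absorb the automorphism and opposite-algebra bookkeeping in parallel. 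Once this lemma is established, the verification that the inductive limit enjoys exactly $n$ pure-state equivalence classes, is not isomorphic to its opposite, has only inner automorphisms, and realizes the prescribed trace space reduces to a standard closing-off argument on $\diamondsuit$-constructions.
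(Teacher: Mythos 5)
Your overall architecture matches the paper's: a $\diamondsuit$-guided continuous chain of separable simple unital $C^\ast$-algebras with successor steps $A_{\beta+1}=A_\beta\rtimes_\alpha\mathbb{Z}$ for an asymptotically inner $\alpha$ supplied by Kishimoto--Ozawa--Sakai, base algebra chosen by Blackadar's theorem, and the pure-state/automorphism/opposite-algebra bookkeeping delegated to the Farah--Hirshberg machinery. But there is a genuine gap at the exact point the paper identifies as the technical heart. For clause (a) you assert that the extension ``can be arranged, via a crossed-product-type construction equipped with a faithful trace-preserving conditional expectation onto $A_\alpha$, so that the restriction map $T(A_{\alpha+1})\to T(A_\alpha)$ is an affine homeomorphism.'' The conditional expectation only gives \emph{surjectivity} of the restriction map (every trace of $A_\alpha$ extends by composing with the expectation); it says nothing about \emph{injectivity}. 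A trace of $A_\alpha\rtimes_\alpha\mathbb{Z}$ need not vanish on the canonical unitary, and by Thomsen's theorem the restriction is a homeomorphism if and only if every power $\alpha^l$ is $\tau$-weakly outer for every $\tau\in\partial T(A_\alpha)$. Asymptotic innerness of $\alpha$ does not give this --- on the contrary, it is compatible with $\alpha$ being $\tau$-weakly inner, which is exactly what the paper exploits for clause (b). So your ``one-step extension lemma'' is not a routine consequence of KOS: one must refine the KOS intertwining argument itself so that the resulting automorphism additionally has all powers $\tau$-weakly outer for all extremal traces. This is the paper's Theorem A, proved via new versions of the KOS path lemmas that control $\lVert u_t-1\rVert_{2,\tau}$ and, using nuclearity (McDuff-ness of $\pi_\tau(A)''$), force $\lvert\tau(u_1^l)\rvert$ to be small. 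Nothing in your proposal produces this control.

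For clause (b) your route is genuinely different from the paper's --- you propose inserting tensor factors carrying new tracial data at cofinally many stages, whereas the paper chooses $\alpha$ to be $\tau$-weakly \emph{inner} on a countable dense set of extremal traces, so that cofinally many restriction maps fail to be injective, and then shows (Proposition \ref{prp1}) that this forces $T(A)$ to be nonseparable. Your variant faces a serious obstruction: tensoring $A_\alpha$ with a nontrivial unital algebra gives every pure state of $A_\alpha$ multiple pure extensions, which destroys the unique-extension property that the $\diamondsuit$ bookkeeping needs at a club of stages to cap the number of equivalence classes of pure states at $n$ (and to keep the limit a counterexample to Naimark's problem at all). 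You would need to explain how the tensor insertions coexist with clause (3); as written they appear to contradict it. Finally, note that proving nonseparability of $T(A)$ from ``cofinally many strict enlargements'' is not automatic either --- the paper needs a separate argument building an uncountable discrete subset of $T(A)$ from non-injective restrictions.
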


In \cite{glimm} Glimm
shows that every separable \cstar-algebra which is not type I has
uncountably many inequivalent irreducible representations.
We remark how theorem \ref{thrm:2} (in particular its third clause)
pushes even further the consistency of the failure of Glimm's dichotomy
in the nonseparable setting, already obtained in \cite{AW} and \cite{farahilan}
(see also section 8.2 of \cite{faICM}).

The starting point for the proof of theorem \ref{thrm:2} is the techniques developed in
\cite{AW} and \cite{farahilan},
which both rely on an application
of the Kishimoto-Ozawa-Sakai theorem in \cite{KOS}.
As we shall see in the next section, the main effort to prove theorem \ref{thrm:2}
will be to refine the results in \cite{KOS}
in order to have a better control on the trace space
of the crossed product obtained from the automorphism provided by the Kishimoto-Ozawa-Sakai theorem
(see theorem \ref{thrm:a} in section \ref{sctn:2}). We remark that a
key tool in the proofs of the main technical
lemmas of the paper (lemmas \ref{lemma:2+} and \ref{lemma:1+}) are the so-called \emph{complemented
partitions of unity} recently defined in \cite{cpou}. We shall recall their definition in the next section.

The paper is organized as follows. We fix some definitions and notations in
section \ref{sctn:1.5}.
In section \ref{sctn:2} we show how the study
of the trace space of a counterexample to Naimark's problem is reduced to
a refinement the Kishimoto-Ozawa-Sakai theorem. In section
\ref{sctn:3} we prove two main lemmas which will be necessary
in section \ref{sctn:4}, where our variant of the Kishimoto-Ozawa-Sakai theorem is proved.
Finally section \ref{sctn:5} is devoted to conclusions and final observations. We remark
that no additional set-theoretic axiom is needed in sections \ref{sctn:3} and \ref{sctn:4}.

\section{Preliminaries} \label{sctn:1.5}
First we fix some notations and definitions.
\subsection{\cstar-algebras}
If $A$ is a \cstar-algebra, $A_{sa}$ is the set of its
self-adjoint elements, $A_+$ the set of its positive elements and $A^1$ the set
of its norm one elements. If $A$ is unital, $\mathcal{U}(A)$ is the set of all unitaries in $A$.
Denote by $S(A)$ the state space, by $P(A)$ the pure state space,
by $T(A)$ the trace space, and by $\partial T(A)$ the set of extremal traces of $A$, all endowed
with the weak* topology. Through this paper by \emph{trace} of a given \cstar-algebra $\A$,
we always mean a tracial state of $\A$, namely a norm-one, positive, linear
functional $\tau$ of $\A$ such that $\tau(ab) = \tau(ba)$ for all $a,b \in \A$.
We denote by the symbol $\sim$ the unitary equivalence between states.
Given any $\phi \in S(A)$, $(\pi_\phi, H_\phi, \xi_\phi)$ is the GNS representation
associated to $\phi$.
If $\tau \in T(A)$, for $A$ a simple \cstar-algebra,
we denote by $\lVert \phantom{a} \rVert_{2, \tau}$ the $\ell_2$-norm induced by $\tau$ on $A$
(the subscript $\tau$ will be suppressed when there is no risk of confusion).
Given two vectors $\xi$ and $\eta$ in a normed vector space, $\xi \approx_\epsilon \eta$
means $\lVert \xi - \eta \rVert < \epsilon$. For functions $\phi$ and $\psi$ on a normed vector space, given a finite subset $G$ of the vector space and $\delta > 0$,
$\phi \approx_{G, \delta} \psi$ means $\lVert \phi(\xi) - \psi(\xi) \rVert < \delta$ for all $\xi \in G$.
Denote by $\text{Aut}(A)$ the set of all automorphisms of $A$.
Given a unital \cstar-algebra $A$ and $u \in \mathcal{U}(A)$, the \emph{inner automorphism}
induced by $u$ on $A$ is $\text{Ad}(u)$ and it sends $a$ to $uau^*$. An automorphism
$\alpha$ is \emph{outer} if it is not induced by a unitary, and we denote by $\text{Out}(A)$
the set of all outer automorphisms. An automorphism
$\alpha \in \text{Aut}(A)$ is \emph{asymptotically inner}
if there exists a continuous path of unitaries $(u_t)_{t \in [0, \infty)}$ in $A$
such that $\alpha(a) = \lim_{t \to \infty} \text{Ad}(u_t) (a) $ for all $a \in A$.
Given $\alpha \in \text{Aut}(A)$ and $\tau \in T(A)$, the trace $\tau$ is $\alpha$\emph{-invariant}
if $\tau(\alpha(a)) = \tau(a)$ for all $a \in A$.
Suppose $A$ is simple and
unital, let $\alpha \in \text{Aut}(A)$, $\tau \in T(A)$ and suppose furthermore that $\tau$
is $\alpha$-invariant.
Then $\alpha$ can be canonically extended to an automorphism of $\pi_\tau(A)''$,
(see \cite[Section 2]{bedos}). The
automorphism
$\alpha$ is \emph{$\tau$-weakly inner} (\emph{$\tau$-strongly outer}) if its canonical extension to $\pi_\tau(A)''$
is inner (outer).

\subsection{Set theory}
$\aleph_1$ is the smallest uncountable cardinal, the well-ordered set
of all countable ordinals. A \emph{club} in $\aleph_1$ is an unbounded subset $C \subseteq \aleph_1$ such
that for every increasing sequence 
$\set{\beta_n}_{n \in \N} \subseteq C$ the supremum $\sup_{n \in \N} \set{\beta_n}$ belongs to $C$. A subset of $\aleph_1$
is \emph{stationary} if it meets every club.
An increasing transfinite $\aleph_1$-sequence of \cstar-algebras $\set{A_\beta}_{\beta
< \aleph_1}$ is \emph{continuous} if $A_\gamma = \overline{ \cup_{\beta < \gamma} A_\beta}$
for every limit ordinal $\gamma < \aleph_1$.

The following is Jensen's original formulation of $\diamondsuit$.
\begin{dia*}
There exists an $\aleph_1$-sequence of sets $\set{A_\beta}_{\beta < \aleph_1}$ such that
\begin{enumerate}
\item $A_\beta \subseteq \beta$ for every $\beta < \aleph_1$,
\item for every
$A \subseteq \aleph_1$ the set $\set{\beta < \aleph_1 : A \cap \beta = A_\beta}$ is stationary.
\end{enumerate}
\end{dia*}
The principle $\diamondsuit$ is known to be true in the G\"odel
constructible universe (\cite[Theorem 13.21]{jech}) and it implies the continuum hypothesis,
thus it is independent from
the Zermelo-Fraenkel axiomatization of set theory plus the Axiom of Choice (\textsf{ZFC}).

\subsection{Tracial Ultrapowers and Complemented Partitions of Unity}
Given a \cstar-algebra $A$, for every $\emptyset \not = T \subseteq T(A)$ we define the seminorm
\[
\lVert a \rVert_{2, T} = \sup_{\tau \in T} \tau(a^*a)^{1/2},
\]
which is always a norm in case $A$ is simple.
We denote $\lVert \cdot \rVert_{2, T(A)}$ by $\lVert \cdot \rVert_{2, u}$.
For a free ultrafilter $\mathcal{U}$ on $\N$, the \emph{ultrapower} of $A$ is
\[
A_\mathcal{U} = \ell_\infty(A) /c_{\mathcal{U}},
\]
where
\[
c_{\mathcal{U}} = \set{ \vec{a} \in \ell_\infty(A) : \lim_{n \to \mathcal{U}}\lVert a_n \rVert= 0 }.
\]
The \emph{uniform tracial ultrapower} of $A$ is
\[
A^{\mathcal{U}} = A_{\mathcal{U}} / J_A,
\]
where $J_A$ is the \emph{trace-kernel ideal}:
\[
J_A = \set{ \vec{a} \in A_{\mathcal{U}} : \lim_{n \to \mathcal{U}} \lVert a_n \rVert_{2,u} = 0}.
\]
We freely identify elements of $\ell_\infty(A)$ with their classes in $A_\mathcal{U}$ or
in $A^{\mathcal{U}}$. The sets of all elements in $A_\mathcal{U}$ and in $A^\mathcal{U}$ which
commute with all elements in (the canonical copy of) $A$ are respectively denoted by $A_\mathcal{U} \cap A'$ and $A^\mathcal{U} \cap A'$.

Any sequence of traces $(\tau_n)_{n \in \N}$ on $A$ induces a trace $\tau$ on $A_{\mathcal{U}}$
\[
\tau(\vec{a}) = \lim_{n \to \mathcal{U}} \tau_n(a_n).
\]
The set of all such limit traces on $A_{\mathcal{U}}$ is $T_{\mathcal{U}}(A)$. With
this notation we have that
\[
J_A = \set{ \vec{a} \in A_{\mathcal{U}} : \lVert \vec{a} \rVert_{2, T_{\mathcal{U}}(A)} = 0}.
\]
We introduce now the notion of complemented partitions of unity of a \cstar-algebra, originally
defined in \cite[Definition G - Definition 3.1]{cpou} (see also \cite[Definition 1.13]{cpou2}).
\begin{definition}[Complemented Partitions of Unity]
Let $A$ be a separable \cstar-algebra such that $T(A)$ is non-empty and compact. We say
that $A$ has \emph{complemented partitions of unity} (abbreviated \emph{CPoU}) if for every
$F \Subset A$, every $\epsilon >0$, every $a_1, \dots a_k \in A_+$ and every scalar $\delta > 0$
such that
\[
\sup_{\tau \in T(A)} \min_{i \le k} \tau(a_k) < \delta,
\]
there exist elements $e_1, \dots, e_k \in A^1_{+}$ such that
\begin{enumerate}
\item $\lVert [e_i, b] \rVert < \epsilon$ for all $b \in F$ $i \le k$,
\item $\sum_{i \le k} e_i = 1$,
\item $\lVert e_i^2 - e_i \rVert_{2,u} < \epsilon$,
\item $\lVert e_i e_j \rVert_{2, u} < \epsilon$,
\item $\tau(a_i e_i) < \delta \tau(e_i) + \epsilon$ for all $i \le k$ and $\tau \in T(A)$.
\end{enumerate}
\end{definition}

\begin{remark} \label{naim:remark}
Let $A$ be a simple \cstar-algebra such that $M_2(\C)$ embeds into $A^{\mathcal{U}} \cap A'$.
It follows that $M_n(\C)$ embeds into $A^{\mathcal{U}} \cap A'$ for every $n \in \N$. This can be
shown as follows.
Let $B$ be a copy of $M_2(\C)$ in $A^{\mathcal{U}} \cap A'$. By standard
arguments it can be shown that there is a copy of $M_2(\C)$ also in
$(A \cup B)' \cap A^{\mathcal{U}}$. This can be done using model theory
via the countable saturation of $A^{\mathcal{U}}$ or, by more standard means,
using Kirchberg's $\epsilon$-test, \cite[Lemma A.1]{kir}. Iterating this operation countably many times,
it is possible to define an embedding $\phi$
of the \textsf{CAR} algebra $M_{2^\infty}$ into $A^{\mathcal{U}} \cap A'$.
Let $\tau$ be the unique trace of $M_{2^\infty}$. Since the unit ball of $A^{\mathcal{U}} \cap A'$
is $\lVert \cdot \rVert_{2,T_{\mathcal{U}}(A)}$-complete (\cite[Lemma 1.6]{cpou})
and the embedding $\phi$
is $(\lVert \cdot \rVert_{2, \tau} - \lVert \cdot \rVert_{2,T_{\mathcal{U}}(A)})$-contractive (by uniqueness 
of the trace on $M_{2^\infty}$),
by Kaplansky's density theorem $\phi$ extends by continuity to an embedding of the hyperfinite
$\text{II}_1$ factor $\mathcal{R}$ into $A^{\mathcal{U}} \cap A'$. Since $M_n(\C)$ embeds into
$\mathcal{R}$ for every $n \in \N$, it follows that it also embeds into $A^{\mathcal{U}} \cap A'$.
\end{remark}

The matrix algebra $M_2(\C)$ is the universal \cstar-algebra generated by two unitaries $u,v$
such that $uv = - vu$ (\cite[Lemma 1.3]{farahCCR}). We have therefore the following proposition.
\begin{proposition} \label{prop:comm}
Let $A$ be a separable, simple, unital \cstar-algebra. The following are equivalent
\begin{enumerate}
\item $M_2(\C)$ embeds into $A^{\mathcal{U}} \cap A'$,
\item \label{ni2} for all finite $F \Subset A$ and $\epsilon > 0$ there are $u,v \in A$ such that
\begin{enumerate}
\item $\max \set{ \lVert u \lVert, \rVert v \rVert} \le 1$,
\item $\max \set{\lVert uu^*-1 \rVert_{2,u}, \lVert vv^* - 1 \rVert_{2,u},\lVert u^*u-1 \rVert_{2,u},
\lVert v^*v - 1 \rVert_{2,u}, \lVert uv + vu \rVert_{2,u}} < \epsilon$,
\item $\max_{b \in F} \set{\lVert [u,b] \rVert_{2,u} , \lVert [v,b] \rVert_{2,u}} < \epsilon$.\end{enumerate}
\end{enumerate}
\end{proposition}
Let $S \subseteq A$ be a set of generators of $A$.
The equivalence above holds also if clause \ref{ni2} is checked for all $F \Subset S$,
rather than $F \Subset A$.

Combining remark \ref{naim:remark}
with \cite[Lemma 3.6]{cpou} and \cite[Lemma 3.7]{cpou} we obtain
the following theorem.
\begin{theorem} \label{cpou}
Let $A$ be a nuclear, separable, unital \cstar-algebra such that $A^{\mathcal{U}} \cap A'$
contains a copy of $M_2(\C)$, for some free ultradilter $\mathcal{U}$. Then $A$ has CPoU.
\end{theorem}
We remark that it is not known whether the nuclearity assumption is required to prove the previous result.

\section{The trace space of a counterexample to Naimark's problem} \label{sctn:2}
Akemann-Weaver's proof in \cite{AW} is the starting point of our analysis.
Their results were refined in \cite{farahilan} to produce,
given $1 \le n \le \aleph_0$, a non-type I \cstar-algebra
$A$ not isomorphic to its opposite, with exactly $n$ equivalence classes of
irreducible representations, and with no outer automorphisms.
For the reader's convenience we quickly recall the construction here.
All omitted details can be found in \cite{farahilan}, where a continuous model-theoretic equivalent
version of $\diamondsuit$, more suitable for working with \cstar-algebras, is introduced.

The algebra $A$ is obtained as an inductive limit of
an increasing continuous $\aleph_1$-sequence of separable simple unital \cstar-algebras
\[
A_0 \subseteq A_1 \subseteq \dots \subseteq A_\beta \subseteq \dots \subseteq A = 
\bigcup_{\beta < \aleph_1} A_\beta
\]
where each inclusion is unital.
The crucial part of the construction is the successor step, where the following
improvement of the main result of \cite{KOS} is used.
\begin{theorem}[\cite{AW}] \label{KOS2}
Let $A$ be a separable simple unital \cstar-algebra, and let $\set{\phi_h}_{h \in \N}$ and
$\set{\psi_h}_{h \in \N}$ be two sequences of pure states of $A$ such that the $\phi_h$'s are mutually
inequivalent, and similarly the $\psi_h$'s. Then there is an asympotically inner automorphism
$\alpha$ such that $\phi_h \sim \psi_h \circ \alpha$ for all $h \in \N$.
\end{theorem}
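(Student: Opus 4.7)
The plan is to obtain Theorem \ref{KOS2} as a countable iteration of (a strengthening of) the original Kishimoto--Ozawa--Sakai construction, handled by a bookkeeping argument. The goal is to build inductively a sequence of unitaries $v_1, v_2, \dots \in \mathcal{U}(A)$ and self-adjoints $h_1, h_2, \dots \in A_{sa}$ with $v_{n+1} = \exp(i h_{n+1})$ such that the partial products $V_n = v_n v_{n-1} \cdots v_1$ satisfy $\mathrm{Ad}(V_n)(a) \to \alpha(a)$ for all $a \in A$, while the states $\psi_h \circ \mathrm{Ad}(V_n)$ converge in norm to $\phi_h \circ \mathrm{Ad}(w_h)$ on a prescribed dense sequence, for some auxiliary unitaries $w_h \in A$. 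Pointwise convergence of the partial products will be guaranteed by a rapidly decreasing sequence of thresholds $\epsilon_n$ with $\sum_n \|h_n\| < \infty$ controlling the tails, while the approximate intertwining of the pure states will be secured one pair at a time via a standard diagonal enumeration of $\N \times \N$ (pair index against dense-sequence index). The resulting $\alpha$ is then asymptotically inner through the continuous path $u_t := V_n \exp\!\bigl((t-n)\, i h_{n+1}\bigr)$ for $t \in [n,n+1]$, and the equations $\psi_h \circ \alpha = \phi_h \circ \mathrm{Ad}(w_h)$ yield the required unitary equivalences.

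The key local step is a \emph{simultaneous perturbation lemma}: given a finite set $F \subseteq A$, $\epsilon > 0$, an integer $N$, and mutually inequivalent pure states $\phi_1,\dots,\phi_N,\psi_1,\dots,\psi_N$ of $A$ (together with current approximation data $\chi_h \sim \psi_h \circ \mathrm{Ad}(V_n)$), there exists $h \in A_{sa}$ with $\|h\| < \epsilon$ such that $u = \exp(ih)$ satisfies $\|u a u^* - a\| < \epsilon$ for $a \in F$ while $\chi_h \circ \mathrm{Ad}(u^*)$ is much closer to $\phi_h$ on a larger finite set, for every $h \le N$. This is proved by exploiting mutual inequivalence of the pure states exactly as in \cite{KOS}: for inequivalent pure states $\rho_1,\dots,\rho_{2N}$, the Glimm-type lemma (or Akemann--Anderson--Pedersen excision, combined with Kadison transitivity in each $\pi_{\rho_i}(A)''$) produces positive elements $e_1,\dots,e_N \in A$ with $\rho_i(e_j) \approx \delta_{ij}$ so that one can construct $h$ as a sum of nearly disjointly supported pieces, each implementing the required shift for a single pair, without disturbing the other pairs or the elements of $F$ by more than $\epsilon$.

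With this lemma in hand, the induction is standard: at stage $n$ one activates a new pair $(\phi_{k(n)}, \psi_{k(n)})$, adds new algebra elements to $F$, tightens $\epsilon$, and applies the lemma to obtain $h_{n+1}$ and $v_{n+1}$. The summability $\sum_n \|h_n\| < \infty$ is arranged by the geometric decay of $\epsilon_n$, which ensures that $\mathrm{Ad}(V_n)$ is Cauchy on the dense sequence, hence extends to an automorphism $\alpha \in \mathrm{Aut}(A)$. The same summability makes the path $u_t$ defined above converge to $\alpha$ pointwise in norm as $t \to \infty$, verifying asymptotic innerness. For each fixed $h \in \N$, the tail of the construction preserves the almost-intertwining relation up to $\epsilon_n$, so in the limit $\psi_h \circ \alpha = \phi_h \circ \mathrm{Ad}(w_h)$ for some unitary $w_h \in A$, giving $\phi_h \sim \psi_h \circ \alpha$.

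The main obstacle is the proof of the simultaneous perturbation lemma, i.e.\ separating \emph{infinitely} many inequivalent pure states through finite-stage excision-type data without letting one perturbation undo the previous ones. Even the one-pair version in \cite{KOS} is technically demanding, and extending it to finitely many inequivalent pairs requires a careful Glimm-type argument producing elements $e_j$ which are simultaneously nearly orthogonal in all $2N$ representations; Akemann-Weaver invoke this extension in \cite{AW} without a full proof in the literature, and a substantial share of the work in sections \ref{sctn:3}--\ref{sctn:4} is expected to be devoted to establishing exactly this step and verifying that the resulting $h_{n+1}$ commutes, up to $\epsilon$, with the finite set $F$ carried from earlier stages.
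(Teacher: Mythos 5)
There is a genuine gap, and it is fatal to the proposal as written: the smallness condition you impose on the generators $h_n$. You require the local perturbation lemma to produce $h\in A_{sa}$ with $\lVert h\rVert<\epsilon$ and you arrange $\sum_n\lVert h_n\rVert<\infty$. But then $\lVert V_{n+1}-V_n\rVert=\lVert e^{ih_{n+1}}-1\rVert\le\lVert h_{n+1}\rVert$, so the partial products $V_n$ converge \emph{in norm} to a unitary $V\in A$ and $\alpha=\mathrm{Ad}(V)$ is inner; then $\psi_h\circ\alpha\sim\psi_h$, which cannot be equivalent to $\phi_h$ in the interesting case $\phi_h\not\sim\psi_h$ (recall inequivalent pure states of a simple $C^\ast$-algebra are at norm distance $2$). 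The same obstruction kills the local lemma itself: to move $\chi_h\sim\psi_h$ weak*-closer to an inequivalent $\phi_h$ on a larger finite set, the implementing unitary must carry the GNS vector of $\chi_h$ approximately onto a vector that Glimm's lemma produces \emph{orthogonal} to it, so $u$ is necessarily far from $1$ (in the actual construction $u=\exp(i\pi\sum_j b_jqb_j^*)$, with exponent of norm about $\pi$, engineered so that $e^{i\pi}=-1$ flips the relevant vector). The correct smallness requirement is on the commutators $\lVert[u_t,b]\rVert$ for $b$ in a growing finite set $F$ containing both the $a_i$ and their preimages $\mathrm{Ad}(V_{n}^*)(a_i)$ --- this is what makes $\mathrm{Ad}(V_n)$ converge pointwise to an automorphism without $V_n$ converging in norm, and it is what clause (3) of lemmas \ref{lemma:2+} and \ref{lemma:1+} provides.

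Two further remarks. First, your one-sided iteration differs from the two-sided approximate intertwining actually used: the paper (following \cite{KOS} and \cite{AW}) builds two sequences $v_n$, $w_n$ with $\phi_h\circ\mathrm{Ad}(v_1\cdots v_n)\approx_{K,\epsilon'}\tilde\psi_h\circ\mathrm{Ad}(w_1\cdots w_{n-1})$ for suitably chosen $\tilde\psi_h\sim\psi_h$, sets $\alpha=\Psi\circ\Phi^{-1}$, and exploits the $\forall\exists\forall\exists$ quantifier structure of the local lemma (the data $(G,\delta)$ are fixed one step before the target accuracy $(K,\epsilon')$ is instantiated); this interleaving is what lets each new pair be activated without destroying earlier approximations. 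Second, note that the paper does not reprove theorem \ref{KOS2} directly: it cites \cite{AW} and instead proves the stronger theorem \ref{thrm:a}, whose sections \ref{sctn:3}--\ref{sctn:4} are devoted not to the multi-state extension you anticipate (that is handled simply by taking the direct sum of the inequivalent irreducible representations, which still admits an approximate diagonal) but to the additional tracial clauses, i.e.\ controlling $\lVert u_1-1\rVert_{2,\tau}$ or forcing $\mathrm{Ad}(u_1^l)$ to move some $a\in A^1$ in $\lVert\cdot\rVert_{2,\tau}$.
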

Theorem \ref{KOS2} is applied in the proof of the following lemma.
\begin{lemma}[{\cite[Lemma 2.3]{farahilan}}] \label{lemma}
Let $A$ be a separable simple unital \cstar-algebra. Suppose $\mathcal{X}$ and $\mathcal{Y}$
are disjoint countable sets of inequivalent pure states of $A$ and let $E$ be an equivalence relation
on $\mathcal{Y}$. Then there exists a separable simple unital \cstar-algebra $B$ such that
\begin{enumerate}
\item $B$ unitally contains $A$,
\item every $\psi \in \mathcal{X}$ has multiple extensions to $B$,
\item every $\phi \in \mathcal{Y}$ extends uniquely to a pure state $\tilde{\phi}$ of $B$,
\item given $\phi_0, \phi_1 \in \mathcal{Y}$, then $\phi_0 E \phi_1$ if and only if
$\tilde{\phi}_0 \sim \tilde{\phi}_1$.
\end{enumerate}
\end{lemma}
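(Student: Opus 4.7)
The plan is to take $B := A \rtimes_\alpha \mathbb{Z}$ for a carefully chosen asymptotically inner automorphism $\alpha$ of $A$, produced by Theorem \ref{KOS2}. The mechanism is the standard dictionary between pure states of a $\mathbb{Z}$-crossed product and the orbit structure of $\alpha$ on pure states of $A$: if $\alpha^n$ is outer for every $n \neq 0$, then a pure state $\phi$ of $A$ extends uniquely to a pure state of $B$ (via the canonical conditional expectation $B \to A$) precisely when its orbit $\{[\phi \circ \alpha^n] : n \in \mathbb{Z}\}$ is aperiodic, and in that case two such states yield unitarily equivalent pure extensions if and only if they lie in the same orbit; if instead $\phi \circ \alpha \sim \phi$, then the implementing unitary can be multiplied by any scalar in $\mathbb{T}$ in the extended representation, producing a circle of mutually inequivalent pure extensions of $\phi$ to $B$.

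To arrange this, I would first enumerate each $E$-equivalence class inside $\mathcal{Y}$, indexing its elements as $\{\phi_{i,k}\}$ by $\mathbb{Z}$ when the class is infinite and by $\mathbb{Z}/m_i\mathbb{Z}$ when the class has size $m_i$. The orbit requirements imposed on $\alpha$ are $\phi_{i,k} \circ \alpha \sim \phi_{i,k+1}$ for all $i, k$, together with $\psi_j \circ \alpha \sim \psi_j$ for every $\psi_j \in \mathcal{X}$. To guarantee that $\alpha^n$ is outer for every $n \neq 0$ --- and hence, by Kishimoto's simplicity theorem, that $B$ is simple --- I would add one auxiliary free $\mathbb{Z}$-orbit of mutually inequivalent pure states, chosen disjoint from $\mathcal{X} \cup \mathcal{Y}$ modulo equivalence. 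The source and target lists extracted from all of these requirements each consist of mutually inequivalent pure states of $A$, so Theorem \ref{KOS2} delivers the desired $\alpha$.

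Setting $B := A \rtimes_\alpha \mathbb{Z}$, clause (1) and the separability/simplicity of $B$ are immediate. For every $\phi \in \mathcal{Y}$, the $\alpha$-orbit of $[\phi]$ is by construction exactly its $E$-class, traversed aperiodically in the infinite case or with exact period $m_i$ in the finite case, and it is disjoint (modulo equivalence) from every other $E$-class and from every $[\psi_j]$; the dictionary then yields clause (3). Clause (2) follows from the circle-of-characters construction applied to each $\psi \in \mathcal{X}$, whose orbit stabilizer contains $1 \in \mathbb{Z}$. For clause (4), if $\phi_0 E \phi_1$ then $\phi_1 \sim \phi_0 \circ \alpha^n$ for some $n$, whence $\tilde\phi_1 \sim \tilde\phi_0 \circ \mathrm{Ad}(u^{-n})$ in $B$ via the canonical implementing unitary $u$; conversely, pure states lying in different $E$-classes have disjoint $\alpha$-orbits (modulo equivalence), so their induced GNS representations on $\ell^2(\mathbb{Z}, H_{\phi_k})$ are inequivalent by the standard Mackey-type argument for free $\mathbb{Z}$-actions.

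The main obstacle is invoking the pure-state dictionary for $\mathbb{Z}$-crossed products in this precise form: establishing the uniqueness half of clause (3) and the inequivalence direction of clause (4) requires not merely purity of the canonical extension but the full structure theorem for pure states of $A \rtimes_\alpha \mathbb{Z}$ when $A$ is simple and $\alpha$ has only outer powers. This is why the auxiliary free orbit is essential --- it ensures the Kishimoto-type outerness needed to apply the dictionary uniformly to every $\phi \in \mathcal{Y}$ and every $\psi \in \mathcal{X}$.
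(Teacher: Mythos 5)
Your overall architecture --- $B = A\rtimes_\alpha\mathbb{Z}$ with $\alpha$ supplied by Theorem \ref{KOS2}, together with the dictionary relating state extensions to the $\alpha$-orbit structure on pure states modulo equivalence --- is exactly the route the paper takes (it defers the details to \cite[Lemma 2.3]{farahilan}, saying only that $\alpha$ is obtained from Theorem \ref{KOS2} for two sequences of inequivalent pure states depending on $\mathcal{X}$, $\mathcal{Y}$ and $E$). However, there is a genuine error in how you encode the finite $E$-classes. You index a class of size $m_i$ by $\mathbb{Z}/m_i\mathbb{Z}$, i.e.\ you force $\phi\circ\alpha^{m_i}\sim\phi$ for every $\phi$ in that class. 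By the very dictionary you state (unique extension if and only if the orbit $\{[\phi\circ\alpha^n]\}_{n\in\mathbb{Z}}$ is aperiodic), such a $\phi$ then has \emph{multiple} state extensions to $B$, which violates clause (3). This is not an edge case: $E$ is an arbitrary equivalence relation and may well have finite classes; for a singleton class your prescription degenerates to $\phi\circ\alpha\sim\phi$, which puts $\phi$ in exactly the same situation as the states of $\mathcal{X}$ and produces precisely the opposite of the required behaviour.

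The fix, which is what the cited proof does, is to pad each finite class $\{\phi_{i,1},\dots,\phi_{i,m_i}\}$ into an infinite aperiodic orbit: adjoin auxiliary pure states $\phi_{i,k}$ for $k\le 0$ and $k> m_i$, pairwise inequivalent and inequivalent to everything in $\mathcal{X}\cup\mathcal{Y}$ and to the other auxiliary states, and require $\phi_{i,k}\circ\alpha\sim\phi_{i,k+1}$ for all $k\in\mathbb{Z}$. Then every $\phi\in\mathcal{Y}$ has a free orbit (hence a unique, pure extension), two elements of $\mathcal{Y}$ lie in the same orbit modulo equivalence if and only if they are $E$-equivalent (clause 4), and these free orbits already force all nonzero powers of $\alpha$ to be outer, so the separate auxiliary free orbit you introduce is needed only to cover the degenerate case $\mathcal{Y}=\varnothing$. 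The remaining ingredients of your argument --- the circle of extensions for $\psi\in\mathcal{X}$ arising from $\psi\circ\alpha\sim\psi$, simplicity of the reduced crossed product via Kishimoto's theorem, and the orbit criterion for equivalence of the unique extensions --- are correct and match the intended proof.
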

The algebra $B$ is $A \rtimes_\alpha \mathbb{Z}$, where $\alpha \in \text{Aut}(A)$
is provided by theorem \ref{KOS2} for two sequences of inequivalent pure states which
depend on $\mathcal{X}$, $\mathcal{Y}$ and $E$.
Back to the construction in \cite{farahilan}, given
$A_\beta$, $A_{\beta +1}= A_\beta \rtimes_\alpha \mathbb{Z}$
is obtained by an application of lemma \ref{lemma},
where $\mathcal{X}, \mathcal{Y}$ and $E$ are chosen according to $\diamondsuit$.

We warm up proving the following simple fact.
\begin{proposition} \label{prop:0}
Let $\set{A_\beta}_{\beta < \aleph_1}$ be an increasing continuous $\aleph_1$-sequence of
unital \cstar-algebras such that
$A_{\beta+1} = A_\beta \rtimes_{\alpha,r} G_\beta$
for all $\beta < \aleph_1$, $G_\beta$ being a discrete group.
Let $A$ be the inductive limit of the sequence.
Suppose furthermore
that every $\tau \in T(A_\beta)$ is $\alpha_g$-invariant
for all $g \in G_\beta$.
Then for each $\beta < \aleph_1$ there is an embedding\footnote{A continuous map which
is a homeomorphism with the image.} $e_\beta$ of $T(A_\beta)$ into $T(A)$.
\end{proposition}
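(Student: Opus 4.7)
The plan is to construct $e_\beta$ by a transfinite recursion that extends a trace step by step along the tower, using the canonical conditional expectation of a discrete crossed product at successor stages and density at limit stages.

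More precisely, for each $\gamma \in [\beta, \aleph_1]$ (with $A_{\aleph_1} := A$) I would define a map $e_\beta^\gamma : T(A_\beta) \to T(A_\gamma)$ satisfying $e_\beta^{\gamma'}(\tau)|_{A_\gamma} = e_\beta^\gamma(\tau)$ for $\beta \le \gamma \le \gamma'$, and then set $e_\beta = e_\beta^{\aleph_1}$. The base case is the identity. At a successor step $\gamma = \delta + 1$, since $G_\delta$ is discrete, there is the canonical faithful conditional expectation $E_\delta : A_\delta \rtimes_{\alpha,r} G_\delta \to A_\delta$ sending $\sum_g a_g u_g$ to $a_e$. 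Given $\tau \in T(A_\beta)$, the inductive hypothesis supplies $e_\beta^\delta(\tau) \in T(A_\delta)$, which by hypothesis is $\alpha_g$-invariant for every $g \in G_\delta$. I would then set $e_\beta^{\delta+1}(\tau) = e_\beta^\delta(\tau) \circ E_\delta$ and check the trace identity on the dense $*$-subalgebra of finite sums: for $a,b \in A_\delta$ and $g,h \in G_\delta$, $E_\delta(au_g \cdot bu_h)$ is nonzero only when $h = g^{-1}$, in which case it equals $a\alpha_g(b)$, and $\alpha$-invariance together with the trace property on $A_\delta$ make the two sides symmetric. At a limit ordinal $\gamma$, the compatible family $\{e_\beta^\delta(\tau)\}_{\beta \le \delta < \gamma}$ defines a norm-one linear functional on the dense $*$-subalgebra $\bigcup_{\delta < \gamma} A_\delta$ of $A_\gamma$, which extends uniquely by continuity to an element of $T(A_\gamma)$.

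To see that $e_\beta$ is an embedding, note first that $e_\beta(\tau)|_{A_\beta} = \tau$ (since each $E_\delta$ fixes $A_\delta$), hence $e_\beta$ is injective and the restriction map is a continuous left inverse. For continuity in the weak* topology, given $a \in A$ and $\epsilon > 0$, I would pick $\gamma < \aleph_1$ and $a' \in A_\gamma$ with $\|a - a'\| < \epsilon$; since each $e_\beta^\gamma$ is weak*-continuous (which follows inductively: the successor case is continuity of $\tau \mapsto \tau \circ E_\delta$ evaluated on a fixed element, and the limit case is the density argument applied uniformly in $\tau$), the map $\tau \mapsto e_\beta(\tau)(a')$ is continuous, and the $\epsilon$-approximation transfers to $a$. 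Finally, because $T(A_\beta)$ is compact and $T(A)$ is Hausdorff, the continuous injection $e_\beta$ is automatically a homeomorphism onto its image.

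The main verification is the successor step: producing the extension and checking the trace identity, which uses precisely the $\alpha_g$-invariance hypothesis (without it, $\tau \circ E_\delta$ would only be a state). The recursion itself is routine, and the topological conclusion is automatic from compactness of $T(A_\beta)$.
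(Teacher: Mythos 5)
Your proposal is correct and follows essentially the same route as the paper: at successor stages the extension $e_\beta^{\delta}(\tau)\circ E_\delta$ is exactly the paper's trace $\tau'\bigl(\sum_g a_g u_g\bigr)=\tau(a_e)$, with the trace identity checked on the dense $*$-subalgebra of finite sums using $\alpha_g$-invariance, and limit stages are handled by unique continuous extension. Your direct computation of $E_\delta(au_g\cdot bu_h)$ and the compactness argument for the embedding are just slightly more explicit versions of what the paper leaves implicit.
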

\begin{proof}
Let $B$ be any unital tracial \cstar-algebra, $\tau \in T(B)$, and $\alpha$ a homomorphism of 
a discrete group $G$ (whose identity is $e$) into $\text{Aut}(B)$
such that $\tau$ is $\alpha_g$-invariant for all $g \in G$.
Consider the reduced crossed product $B \rtimes_{\alpha,r} G$ and denote by
$u_g$, for $g \in G$, the unitaries corresponding to the elements of the group.
The map defined on any
finite sum $\sum_{g \in G} a_g u_g$ as
\[
\tau'(\sum_{g \in G} a_g u_g) = \tau(a_e)
\]
extends uniquely to a trace of $B\rtimes_{\alpha,r} G$. In fact, $\tau'$ is $\text{Ad}(u)$-invariant
for all $ u \in \mathcal{U} (B)$ since $\tau$ is a trace, and it is $\text{Ad}(u_g)$-invariant for all $g \in G$ since $\tau$ is $\alpha_g$-invariant, hence $\tau'(w a) = \tau'(a w)$ for all $a \in B\rtimes_{\alpha,r} G$ and $w= w_1 \dots w_k$, where 
$w_j \in \mathcal{U}(B) \cup \set{u_g : g \in G}$ for all $j \le k$.
The linear span of the set of products of elements in $\mathcal{U}(B) \cup \set{u_g : g \in G}$
is dense in $B\rtimes_{\alpha,r} G$, therefore $\tau'(ab) = \tau'(ba)$ for all
$a,b \in B\rtimes_{\alpha,r} G$.
Thus, the embedding $e_\beta$ can be constructed by induction iterating
the extension above at successor steps, and taking the unique extension of previous
steps at limit stages.
\end{proof}
In the Akemann-Weaver construction (and in the one from \cite{farahilan} we previously recalled)
there is no restriction, when starting the induction, on the choice of the \cstar-algebra $A_0$,
as long as $A_0$ is separable simple and unital. Since every metrizable
Choquet simplex occurs as the trace space of some separable simple unital \cstar-algebra
(see \cite{blackAF}), and since all traces are invariant for asymptotically
inner automorphisms (as they are pointwise limits of inner automorphisms),
proposition \ref{prop:0} can be applied to the construction we sketched before to have
the following.
\begin{corollary} \label{corollary:0}
Assume $\diamondsuit$.
For every metrizable Choquet simplex $X$ and $1 \le n \le \aleph_0$,
there is a non-type I \cstar-algebra
$A$ not isomorphic to its opposite, with exactly $n$ equivalence classes of
irreducible representations, and with no outer automorphisms, such that $T(A)$
contains a homeomorphic copy of $X$.
\end{corollary}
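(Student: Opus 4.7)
The plan is to combine three ingredients that are already on the table: Blackadar's realization theorem for metrizable Choquet simplices, the Akemann--Weaver/Farah--Hirshberg transfinite construction recalled just above, and proposition \ref{prop:0}. The point is simply to choose the base algebra of the transfinite construction wisely and then verify that the invariance hypothesis of proposition \ref{prop:0} is automatic along the induction.

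First, given a metrizable Choquet simplex $X$, invoke \cite{blackAF} to pick a separable simple unital $C^\ast$-algebra $A_0$ with $T(A_0)$ affinely homeomorphic to $X$. Use $A_0$ as the starting point of the construction from \cite{farahilan} outlined above: under $\diamondsuit$, build a continuous increasing $\aleph_1$-sequence $\{A_\beta\}_{\beta < \aleph_1}$ of separable simple unital $C^\ast$-algebras with $A_{\beta+1}=A_\beta\rtimes_{\alpha_\beta}\mathbb Z$, where the automorphisms $\alpha_\beta$ come from theorem \ref{KOS2} applied to countable sets of inequivalent pure states dictated by the $\diamondsuit$-sequence (and by the target number $n$ of equivalence classes of irreducible representations). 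The inductive limit $A=\bigcup_{\beta<\aleph_1}A_\beta$ then enjoys, by the results of \cite{farahilan}, all the structural properties required: $A$ is non-type I, has exactly $n$ equivalence classes of irreducible representations, is not isomorphic to its opposite, and has only inner automorphisms.

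It remains to produce a homeomorphic copy of $X$ inside $T(A)$. Here proposition \ref{prop:0} does all the work, provided we can check its invariance hypothesis for every group action appearing in the tower. But each $\alpha_\beta$ is asymptotically inner, so there is a continuous path $(u_t^\beta)_{t\in[0,\infty)}\subseteq\mathcal{U}(A_\beta)$ with $\alpha_\beta(a)=\lim_{t\to\infty}\mathrm{Ad}(u_t^\beta)(a)$ for all $a\in A_\beta$. Since every $\tau\in T(A_\beta)$ is invariant under all inner automorphisms, the norm-continuity of $\tau$ yields
\[
\tau(\alpha_\beta(a))=\lim_{t\to\infty}\tau(\mathrm{Ad}(u_t^\beta)(a))=\tau(a),
\]
so $\tau$ is $\alpha_\beta$-invariant. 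Iterating $\alpha_\beta$, every power acts trivially on $T(A_\beta)$, hence the whole $\mathbb Z$-action does, and the hypothesis of proposition \ref{prop:0} is met.

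Applying proposition \ref{prop:0} at $\beta=0$ we obtain an embedding $e_0\colon T(A_0)\hookrightarrow T(A)$, so $T(A)$ contains a homeomorphic copy of $T(A_0)\cong X$, completing the proof. There is no real obstacle here: the only mildly subtle point is that asymptotic innerness, rather than literal innerness, suffices for $\tau$-invariance, which is precisely the continuity argument displayed above. Note in particular that this argument gives only a \emph{homeomorphic} (in fact affine) copy of $X$ inside $T(A)$; pinning down $T(A)$ on the nose, as in clause \ref{clause1} of theorem \ref{thrm:2}, is what forces the refinement of the Kishimoto--Ozawa--Sakai theorem announced in the introduction.
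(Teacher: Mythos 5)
Your proposal is correct and follows essentially the same route as the paper: choose $A_0$ via Blackadar's realization theorem, run the $\diamondsuit$-construction of \cite{farahilan} on top of it, observe that traces are invariant under asymptotically inner automorphisms (and hence under all powers, since $\tau\circ\alpha=\tau$ immediately gives $\tau\circ\alpha^k=\tau$), and apply proposition \ref{prop:0}. The paper compresses this into a single paragraph preceding the corollary, but the content is identical, including the remark that this yields only an embedded copy of $X$ rather than control of $T(A)$ itself.
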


Proposition \ref{prop:0} implies that the $\aleph_1$-sequence
\[
T(A_0) \xleftarrow{r_{1,0}} T(A_1) \xleftarrow{r_{2,1}} \dots  T(A_\beta) \xleftarrow{r_{\beta+1,\beta}}
\dots \leftarrow T(A)
\]
is a projective system whose bonding maps (the restrictions) are surjective.
Proposition \ref{prop:0} also entails that each restriction has a continuous section.
Theorem \ref{thrm:2} answers affirmatively the questions whether it is possible
to perform the constructions in \cite{AW} and \cite{farahilan} so that the $\aleph_1$-sequence
above `stabilizes',
or so that it is forced to be `strictly increasing'.

Depending on which of the final two clauses of theorem \ref{thrm:2} one wants to obtain, two
different strengthenings of lemma \ref{lemma} are needed.
Clause \ref{clause1} follows
if, when applying lemma \ref{lemma}
to $A = A_\beta$ (hence $B=A_{\beta} \rtimes_\alpha \mathbb{Z}$),
we require in addition that the restriction map $r_{\beta +1, \beta}: T(A_{\beta} \rtimes_\alpha \mathbb{Z}) \to T(A_\beta)$ is a homeomorphism for all $\beta < \aleph_1$. This would in fact
entail that $T(A)$ is affinely homeomorphic to $T(A_0)$. On the other hand, in order
to get clause \ref{clause2}, it is
sufficient to require $r_{\beta + 1, \beta}$ to be not injective for all $\beta < \aleph_1$, as shown
in proposition \ref{prp1}.

Since $\alpha$ is asymptotically inner,
the restriction map $r_{\beta +1, \beta} : T(A_{\beta }\rtimes_\alpha \mathbb{Z}) \to T(A_\beta)$
is a homeomorphism
if and only if all the powers of
$\alpha$
are $\tau$-weakly outer for all $\tau \in \partial T(A)$ (see \cite[Theorem 4.3]{thomsen}).

Thus, all we need to show is the following variant of theorem \ref{KOS2}.
\begin{thrm} \label{thrm:a}
Let $A$ be a separable simple unital \cstar-algebra, and let $\set{\phi_h}_{h \in \N}$ and
$\set{\psi_h}_{h \in \N}$ be two sequences of pure states of $A$ such that the $\phi_h$'s are mutually
inequivalent, and similarly the $\psi_h$'s.
\begin{enumerate}
\item \label{ta:i1} If $A$ is nuclear and there exists a copy of $M_2(\C)$
inside $A^{\mathcal{U}} \cap A'$ for some free ultrafilter $\mathcal{U}$, then
there is an asymptotically inner automorphism $\alpha$ such that $\phi_h \sim \psi_h \circ \alpha$
for all $ h \in \N$, and such that $\alpha^l$ is $\tau$-strongly outer for all $\tau \in \partial T(A)$ and all
$l \in \N$. Moreover, $\alpha$ can be chosen so that
$B:= A \rtimes_{\alpha} \mathbb{Z}$ is such that $M_2(\C)$
embeds into $B^{\mathcal{U}} \cap B'$.
\item \label{ta:i2} Given a countable $T \subseteq \partial T(A)$,
there is an asymptotically inner automorphism $\alpha$ such that $\phi_h \sim \psi_h \circ \alpha$
for all $ h \in \N$ and such that $\alpha$ is $\tau$-weakly inner for all $\tau \in T$.
\end{enumerate}
\end{thrm}

Let's assume theorem \ref{thrm:a} for the rest of this section.
\begin{lemma} \label{lemma:+}
Let $A$ be a separable simple unital \cstar-algebra. Suppose $\mathcal{X}$ and $\mathcal{Y}$
are disjoint countable sets of inequivalent pure states of $A$ and let $E$ be an equivalence relation
on $\mathcal{Y}$. Then there exists a separable simple unital \cstar-algebra $B$ such that
\begin{enumerate}
\item $B$ unitally contains $A$,
\item every $\psi \in \mathcal{X}$ has multiple extensions to $B$,
\item every $\phi \in \mathcal{Y}$ extends uniquely to a pure state $\tilde{\phi}$ of $B$,
\item given $\phi_0, \phi_1 \in \mathcal{Y}$, then $\phi_0 E \phi_1$ if and only if
$\tilde{\phi}_0 \sim \tilde{\phi}_1$,
\item either of the following conditions can be obtained:
\begin{enumerate}
\item if $A$ is nuclear and there exists a copy of $M_2(\C)$
inside $A^{\mathcal{U}} \cap A'$ for some free ultrafilter $\mathcal{U}$,
then $B$ is nuclear, there exists a copy of $M_2(\C)$
inside $B^{\mathcal{U}} \cap B'$ for some free ultrafilter $\mathcal{U}$, and the restriction map $r: T(B) \to T(A)$ is
a homeomorphism,
\item the restriction map $r: T(B) \to T(A)$ is not injective.
\end{enumerate}
\end{enumerate}
\end{lemma}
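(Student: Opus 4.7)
The plan is to reuse the construction from \cite[Lemma 2.3]{farahilan} verbatim, substituting the appeal to Theorem \ref{KOS2} with an appeal to the corresponding clause of Theorem \ref{thrm:a}, so that the new control on $\tau$-weak (in)nerness translates into the desired statement about the restriction map $r : T(B) \to T(A)$. Recall that in \cite{farahilan} the data $(\mathcal{X}, \mathcal{Y}, E)$ is encoded into two sequences $\set{\phi_h}_{h \in \N}$ and $\set{\psi_h}_{h \in \N}$ of mutually inequivalent pure states of $A$, chosen so that any asymptotically inner automorphism $\alpha$ with $\phi_h \sim \psi_h \circ \alpha$ yields, upon setting $B = A \rtimes_\alpha \mathbb{Z}$, a $C^\ast$-algebra satisfying clauses (1)--(4). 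I would keep this part of the proof unchanged; only the choice of $\alpha$ needs refinement.

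To obtain clause (5a), I apply clause (1) of Theorem \ref{thrm:a} to the sequences $\set{\phi_h}, \set{\psi_h}$, using the hypothesis that $A$ is nuclear. This produces an asymptotically inner $\alpha$ such that $\phi_h \sim \psi_h \circ \alpha$ and, additionally, $\alpha^l$ is $\tau$-weakly outer for every $\tau \in \partial T(A)$ and every nonzero $l \in \N$. The crossed product $B = A \rtimes_\alpha \mathbb{Z}$ is nuclear since $\mathbb{Z}$ is amenable. Moreover, because $\alpha$ is asymptotically inner it preserves every trace of $A$, so by the characterization from \cite[Theorem 4.3]{thomsen} (recalled in the paragraph immediately before Theorem \ref{thrm:a}) the map $r : T(B) \to T(A)$ is an affine homeomorphism.

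To obtain clause (5b), I apply clause (2) of Theorem \ref{thrm:a} to the same sequences, choosing $T = \set{\tau_0}$ for an arbitrary fixed $\tau_0 \in \partial T(A)$ (in the inductive application to Theorem \ref{thrm:2} we arrange that $T(A_\beta) \neq \empset$ at every stage, so such a $\tau_0$ is available). The resulting automorphism $\alpha$ is asymptotically inner and $\tau_0$-weakly inner, hence by the same criterion of \cite[Theorem 4.3]{thomsen} the trace $\tau_0$ admits at least two distinct extensions to $B = A \rtimes_\alpha \mathbb{Z}$, so $r$ fails to be injective.

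In both cases the verification of clauses (1)--(4) is identical to that in \cite{farahilan}, since Theorem \ref{thrm:a} preserves the conclusion $\phi_h \sim \psi_h \circ \alpha$ needed there. The real work, therefore, is entirely concentrated in Theorem \ref{thrm:a} itself (the content of sections \ref{sctn:3} and \ref{sctn:4}); given that result, Lemma \ref{lemma:+} is a routine bookkeeping strengthening of Lemma \ref{lemma}.
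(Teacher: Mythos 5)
Your proposal is correct and follows essentially the same route as the paper, whose proof of this lemma is precisely the one-line instruction to rerun \cite[Lemma 2.3]{farahilan} with Theorem \ref{thrm:a} in place of Theorem \ref{KOS2}; your additional details (nuclearity of $A \rtimes_\alpha \mathbb{Z}$ via amenability of $\mathbb{Z}$, and the translation of $\tau$-weak (in)nerness into properties of $r$ via \cite[Theorem 4.3]{thomsen}) match the discussion the paper gives just before stating Theorem \ref{thrm:a}.
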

\begin{proof}
This lemma can be proved as lemma 2.3 of \cite{farahilan} by
substituting all the instances of
theorem \ref{KOS2} with theorem \ref{thrm:a}.
\end{proof}

Once lemma \ref{lemma:+} is proved, theorem \ref{thrm:2} in the introduction follows from
the proof of lemma 2.8 and theorem 1.2 in \cite{farahilan},
by substituting all instances of lemma 2.3 of \cite{farahilan}
with our lemma \ref{lemma:+}. Point 2 of the
last clause of
theorem \ref{thrm:2} is a consequence of the following fact.

\begin{proposition} \label{prp1}
Let $\set{A_\beta}_{\beta < \aleph_1}$ be an increasing continuous $\aleph_1$-sequence as in
proposition \ref{prop:0} and let $A$ be the inductive limit of the $\aleph_1$-sequence. Suppose that
the set $\set{ \beta < \aleph_1:
r_{\beta +1, \beta} : T(A_{\beta +1}) \to T(A_\beta) \ \text{is not injective}}$ is unbounded in $\aleph_1$.
Then $T(A)$ is nonseparable.
\end{proposition}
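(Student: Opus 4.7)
My plan is a proof by contradiction. Suppose $T(A)$ admits a countable weak-$\ast$ dense subset $D=\{d_n : n \in \mathbb{N}\}$; I shall show this forces the set $S := \{\beta<\aleph_1 : r_{\beta+1,\beta}\ \text{is not injective}\}$ to be bounded in $\aleph_1$, contradicting the hypothesis. For each $\beta<\aleph_1$, let $\iota_\beta : T(A_\beta) \to T(A_{\beta+1})$ denote the continuous affine section of $r_{\beta+1,\beta}$ implicit in the proof of proposition \ref{prop:0}---namely $\iota_\beta(\tau) = \tau \circ \mathbb{E}_\beta$, where $\mathbb{E}_\beta : A_{\beta+1} \to A_\beta$ is the canonical conditional expectation $\sum_g a_g u_g \mapsto a_e$ (well-defined as a trace thanks to the $G_\beta$-invariance of every trace on $A_\beta$). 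Call $\mu \in T(A)$ \emph{canonical at stage} $\beta$ if $\mu|_{A_{\beta+1}} = \iota_\beta(\mu|_{A_\beta})$, and set $S_\mu := \{\beta<\aleph_1 : \mu\text{ is not canonical at stage }\beta\}$.

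The heart of the argument is the claim that $|S_\mu| \le \aleph_0$ for every $\mu \in T(A)$. Granting this, $\bigcup_{n \in \mathbb{N}} S_{d_n}$ is countable and hence bounded by some $\beta^* < \aleph_1$. By the unboundedness of $S$, pick $\beta \in S$ with $\beta > \beta^*$; then each $d_n|_{A_{\beta+1}}=\iota_\beta(d_n|_{A_\beta})$ lies in $\iota_\beta(T(A_\beta))$. The restriction $T(A) \to T(A_{\beta+1})$ is continuous and surjective, and therefore sends the dense set $D$ to a weak-$\ast$ dense subset of $T(A_{\beta+1})$; being contained in the compact---hence closed---set $\iota_\beta(T(A_\beta))$, this dense subset forces $\iota_\beta(T(A_\beta)) = T(A_{\beta+1})$. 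Since $\iota_\beta$ is a section of $r_{\beta+1,\beta}$, its surjectivity implies that $r_{\beta+1,\beta}$ is a bijection, contradicting $\beta \in S$.

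To establish the key claim I would work in the GNS triple $(\pi_\mu, H_\mu, \xi_\mu)$ of $\mu$ together with the increasing filtration of closed subspaces $H_\mu^\beta := \overline{\pi_\mu(A_\beta)\xi_\mu}$; continuity of the sequence $\{A_\beta\}_{\beta<\aleph_1}$ makes the filtration continuous at limit stages, and $\overline{\bigcup_\beta H_\mu^\beta}=H_\mu$, so the increment subspaces $N_\beta := H_\mu^{\beta+1}\ominus H_\mu^\beta$ are pairwise orthogonal across $\beta<\aleph_1$. Non-canonicity of $\mu$ at $\beta$ amounts to the existence of $g \in G_\beta\setminus\{e\}$ and $a \in A_\beta$ with $\mu(au_g)=\langle \pi_\mu(u_g)\xi_\mu,\pi_\mu(a^\ast)\xi_\mu\rangle\neq 0$, i.e.\ $\pi_\mu(u_g)\xi_\mu \not\perp H_\mu^\beta$. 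The plan is to associate a nonzero witness $\zeta_\beta \in N_\beta$ to each $\beta \in S_\mu$ and then use a Bessel-type inequality applied to a suitable vector of $H_\mu$ to bound $|S_\mu|$ by $\aleph_0$.

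The main obstacle is actually producing a nonzero witness $\zeta_\beta \in N_\beta$ for every $\beta \in S_\mu$. The naive new-part vector $\pi_\mu(u_g)\xi_\mu - P_{H_\mu^\beta}\pi_\mu(u_g)\xi_\mu$ can degenerate to zero in the pathological case when $\pi_\mu(u_g)\xi_\mu$ already lies in $H_\mu^\beta$---that is, when $u_g$ is $L^2(\mu)$-approximable from $A_\beta$. Ruling this out must exploit the crossed product relations $u_g a u_g^{-1} = \alpha_g(a)$ together with the $G_\beta$-invariance of traces, so that non-canonicity at $\beta$ genuinely creates new orthogonal content in $N_\beta$; the resulting Bessel summability then bounds $|S_\mu|$ by $\aleph_0$ and completes the proof.
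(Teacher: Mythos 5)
Your first branch (all the $d_n$ eventually canonical, hence their restrictions land in the proper closed set $\iota_\beta(T(A_\beta))$ and cannot be dense in $T(A_{\beta+1})$ at a non-injective stage $\beta$) is sound and is essentially the paper's Claim-proof. The fatal problem is your key claim that $\lvert S_\mu\rvert\le\aleph_0$ for \emph{every} $\mu\in T(A)$: this is false, and the obstacle you flag at the end is not a technical wrinkle but the whole point of the proposition. The mechanism by which the paper makes $r_{\beta+1,\beta}$ non-injective is that $\alpha_\beta$ is $\tau$-weakly \emph{inner}: its extension to $\pi_{\mu_\beta}(A_\beta)''$ is $\mathrm{Ad}(V)$ for a unitary $V$ in the von Neumann closure. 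The corresponding non-canonical extension $\mu$ satisfies $\pi_\mu(u_g)\xi_\mu=V\xi_\mu\in H_\mu^\beta$, so $H_\mu^{\beta+1}=H_\mu^\beta$ and $N_\beta=0$: there is no nonzero witness $\zeta_\beta$, and no crossed-product relation can manufacture one, because the degenerate case genuinely occurs. A trace that follows the weakly-inner extension at every successor stage is non-canonical everywhere, so $S_\mu$ can have cardinality $\aleph_1$ (concretely, already $A_{\beta+1}=A_\beta\rtimes_{\mathrm{id}}\mathbb{Z}$ with a point-mass spectral choice at each stage gives such a $\mu$). Even setting this aside, your Bessel step does not parse: the vectors $\pi_\mu(u_{g_\beta})\xi_\mu$ change with $\beta$, so there is no single vector whose components in the $N_\beta$ are square-summable, and there is no uniform lower bound on $\lVert\zeta_\beta\rVert$ in any case.

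What is missing is the second branch of the paper's argument, which handles exactly the scenario your key claim tries to exclude. If the set of $\beta$ at which some $d_n\restriction A_\beta$ has multiple extensions to $A$ is unbounded, the paper extracts (along a cofinal subsequence) a coherent chain $\set{\tau_\beta}_{\beta<\aleph_1}$ with $\tau_{\gamma}\restriction A_\beta=\tau_\beta$ and with each $\tau_\beta$ admitting two distinct extensions $\tau_{\beta+1}\neq\tau'_{\beta+1}$ to $A_{\beta+1}$; pulling back disjoint open sets separating $\tau_{\beta+1}$ from $\tau'_{\beta+1}$ along $r_{\beta+1}\colon T(A)\to T(A_{\beta+1})$ yields an uncountable separated family in $T(A)$, contradicting separability. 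Your proof needs either this dichotomy (bounded versus unbounded non-canonicity of the dense sequence, with a separate argument for each horn) or a correct replacement for the key claim; as written, the case that actually arises in the construction is the one your argument cannot reach.
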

\begin{proof}
Suppose $T(A)$ is separable and let $\set{\tau_n}_{n \in \N}$ be a countable
dense subset of $T(A)$.
\begin{claim}
The set $B=\set{ \beta < \aleph_1 : \exists n \ \text{s.t.} \ \tau_n\restriction{A_\beta} \
\text{has multiple extensions to} \ A}$ is unbounded in $\aleph_1$.
\end{claim}
\begin{proof}
Suppose the claim is false and let $\gamma < \aleph_1$ be an upper bound for $B$. Then
each $\tau_n \restriction{A_\gamma}$ has a unique extension to $A_{\gamma +1}$, which,
as we already know from the proof of proposition \ref{prop:0},
is defined through the conditional expectation. If $\gamma$ is big enough
there is a trace $\sigma \in T(A_{\gamma +1})$, $a \in A_\gamma$, and $g \in G_\beta$
such that $\sigma(a u_g) \not= 0$. If $\epsilon > 0 $ is small enough, then
$\set{\tau_n \restriction{A_{\gamma+1}}} \cap \set{ \tau \in T(A_{\gamma+1}) :
\lvert \tau(au_g) - \sigma(au_g) \rvert < \epsilon}$ is empty.
This is a contradiction since
$\set{\tau_n \restriction{A_{\gamma+1}}}$ is dense in $T(A_{\gamma+1})$.
\end{proof}
The claim entails that there is an $\aleph_1$-sequence of traces (modulo taking a cofinal subsequence
of the algebras $A_\beta$)
$\set{ \tau_{\beta}}_{\beta < \aleph_1}$ such that
\begin{enumerate}
\item $\tau_\beta \in T(A_\beta)$ for all $\beta < \aleph_1$,
\item $\tau_\gamma \restriction{A_\beta} = \tau_\beta$ for all $\gamma > \beta$,
\item the trace $\tau_\beta$ admits two different
extensions to $T(A_{\beta+1})$ for every $\beta < \aleph_1$.
\end{enumerate}
This allows to build a discrete set of size $\aleph_1$ in $T(A)$ as follows, which is a contradiction.
For any $\beta <
\aleph_1$ consider $\tau'_{\beta + 1} \in T(A_{\beta +1})$
different from $\tau_{\beta +1}$ and extending $\tau_\beta$,
and pick two open sets in $T(A_{\beta + 1})$ dividing them. Their preimage
via the restriction map $r_{\beta +1}: T(A) \to T(A_{\beta +1})$
are two open disjoint subsets of $T(A)$ such that only one of them contains all the extensions of
$\tau_{\beta +1}$. Hence, any $\aleph_1$-sequence of extensions in $T(A)$ of the
elements in $\set{\tau'_\beta}_{\beta <
\aleph_1}$ has the required property.
\end{proof}
The next two sections are devoted to the proof of theorem \ref{thrm:a}.

\section{Paths of unitaries} \label{sctn:3}
The aim of this section is to prove lemmas \ref{lemma:2+} and \ref{lemma:1+}, two variants
of lemma 2.2
of \cite{KOS} (for simple \cstar-algebras), which are needed for theorem \ref{thrm:a}.
The reader can safely assume these lemmas as blackboxes and
go directly to section \ref{sctn:4}, to see how they are used in the main proofs,
before going through this section.

\begin{lemma} \label{lemma:2+}
Let $A$ be a separable, simple, unital \cstar-algebra, $(\phi_h)_{h \le m}$
some inequivalent pure states and $\set{\tau_1, \dots, \tau_n} \subseteq \partial T(A)$.
For every $F \Subset A$ and $ \epsilon > 0$, there exist
$G \Subset A$ and $\delta >0$ such that, if $(\psi_h)_{h \le m}$ are
pure states which satisfy $\psi_h \approx_{G, \delta} \phi_h$ for
all $1 \le h \le m$,
then for every finite $K \subset A$ and every $\epsilon' > 0$ there is a path of unitaries $(u_t)_{t \in [0,1]}$
such that
\begin{enumerate}
\item $ u_0 = 1$,
\item $\phi_h \circ \text{Ad}(u_1) \approx_{K, \epsilon'} \psi_h$ for all $1 \le h \le m$,
\item $\lVert b - \text{Ad}(u_t)(b) \rVert < \epsilon$ for all $b \in F$,
\item
\begin{enumerate}
\item \label{item4a} $\lVert u_t - 1 \rVert_{2,k} < \epsilon$ for all $k \le n$,\footnote{
We suppress the notation and denote $\lVert \ \rVert_{2, \tau_k}$ by $\lVert \ \rVert_{2,k}$.}
\item \label{item4b} If moreover $A$ is nuclear and
$M_2(\C)$
embeds into $A^{\mathcal{U}} \cap A'$, then $\lVert u_t - 1 \rVert_{2,u} < \epsilon$.
\end{enumerate}
\end{enumerate}
\end{lemma}

\begin{lemma} \label{lemma:1+}
Let $A$ be an infinite-dimensional separable, nuclear, simple, unital, \cstar-algebra such that $M_2(\C)$
embeds into $A^{\mathcal{U}} \cap A'$ and let $(\phi_h)_{h \le m}$ be inequivalent pure states.
For every $F \Subset A$ and $ \epsilon > 0$, there exist
$G \Subset A$ and $\delta >0$ such that, if $(\psi_h)_{h \le m}$ are inequivalent
pure states which satisfy $\psi_h \approx_{G, \delta} \phi_h$ for
all $1 \le h \le m$,
then for every $\sigma \in \mathcal{U}(A)$, $k \in \N\setminus \set{0}$, $K \Subset A$ and every $\epsilon' > 0$
there is a path of unitaries $(u_t)_{t \in [0,1]}$
and an $ a \in A^1$ such that, for every $\tau \in T(A)$
\begin{enumerate}
\item \label{n:i1} $ u_0 = 1$,
\item $\phi_h \circ \text{Ad}(u_1) \approx_{K, \epsilon'} \psi_h$ for all $1 \le h \le m$,
\item \label{n:i2} $\lVert b - \text{Ad}(u_t)(b) \rVert < \epsilon$ for all $b \in F$,
\item \label{n:i4} $\lVert \text{Ad}(\sigma)(a) - \text{Ad}(u_1^{*k})(a) \rVert_{2, \tau} > 1/4$.
\end{enumerate}
\end{lemma}

The proof of lemma \ref{lemma:2+} mirrors the one of proposition 2.2
in \cite{KOS}, with the addition that the unitary $u_1$
is close to the identity with respect of the $\ell_2$-norm induced
by some traces.
In a certain sense, the construction in lemma \ref{lemma:1+} achieves the opposite.
In fact, in this second case, we need a path of unitaries as in
proposition 2.2 of \cite{KOS} so that $u_1$ (or one of its powers) is far from the scalars
with respect of the $\ell_2$-norm induced by a trace.

\subsection{Proof of lemma \ref{lemma:2+}}
We briefly introduce some notation for the following proposition. Given a state $\phi$ on
a \cstar-algebra $A$, we let $L_\phi$ be the following closed left ideal
\[
L_\phi:= \set{ a \in A : \phi(a^*a) = 0} = \set{ a \in A: \pi_\phi(a) \xi_\phi = 0}
\]
We recall that for any state $\phi$ the intersection $L_\phi \cap L_\phi^*$ is a
hereditary subalgebra of $A$.
\begin{proposition} \label{lemma:1}
Let $A$ be an infinite-dimensional, simple, unital \cstar-algebra, $\tau \in T(A)$
and $\phi_1, \dots, \phi_m$ some pure states of $A$.
Then
\[
M = \set{ a \in A : \pi_{\phi_j}(a) \xi_{\phi_j} = \pi_{\phi_j}(a^*) \xi_{\phi_j} = 0 \ \forall j \le m}
\]
is a hereditary subalgebra of $A$ and $\pi_\tau[M]$ is strongly dense in $\pi_\tau[A]''$.
\end{proposition}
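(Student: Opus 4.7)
The plan is to handle the two assertions separately. The hereditary claim is routine: write $M = \bigcap_{j=1}^m H_j$ with $H_j := L_{\phi_j} \cap L_{\phi_j}^*$. Each $H_j$ is a hereditary $C^*$-subalgebra of $A$ (a standard fact about the closed left ideal attached to a state), and the hereditary property passes to finite intersections directly from the positive-ordering definition, since a positive element of $A$ below an element of each $H_j$ lies in each $H_j$.

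For the strong density of $\pi_\tau(M)$ in $\pi_\tau(A)''$, the cleanest route is to pass to the enveloping algebra $A^{**}$. Let $P_{\phi_j} \in A^{**}$ be the support projection of the pure state $\phi_j$, that is, the minimal projection in the type-$\mathrm{I}$ summand of $A^{**}$ corresponding to the irreducible GNS representation $\pi_{\phi_j}$. Then $H_j = \{a \in A : a P_{\phi_j} = P_{\phi_j} a = 0\}$ has weak-$*$ closure $H_j^{**} = (1-P_{\phi_j})\,A^{**}\,(1-P_{\phi_j})$ in $A^{**}$, and intersecting yields $M^{**} = (1-P)\,A^{**}\,(1-P)$ with $P := \bigvee_{j=1}^m P_{\phi_j}$. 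Since the normal extension $\bar\pi_\tau : A^{**} \to \pi_\tau(A)''$ is surjective and sends weak-$*$ closures to weak closures, one obtains
\[
\pi_\tau(M)'' = \bar\pi_\tau(M^{**}) = \bigl(1-\bar\pi_\tau(P)\bigr)\,\pi_\tau(A)''\,\bigl(1-\bar\pi_\tau(P)\bigr),
\]
and strong density therefore reduces to the claim $\bar\pi_\tau(P_{\phi_j}) = 0$ for every $j$.

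The main obstacle is precisely this last disjointness, which relies on the non-type-$\mathrm{I}$ hypothesis on $A$ that is implicit in the paper's applications. Under that hypothesis, since $\tau$ is an extremal trace and $A$ is unital, $\pi_\tau(A)''$ is a $\mathrm{II}_1$ factor; its commutant is then again a $\mathrm{II}_1$ factor, hence has no minimal projections, so $\pi_\tau$ contains no irreducible subrepresentation and is disjoint from each $\pi_{\phi_j}$. Disjointness of representations translates to orthogonality of the central supports $z_\tau$ and $z_{\phi_j}$ in $A^{**}$; combined with $P_{\phi_j} \le z_{\phi_j}$, this gives $\bar\pi_\tau(P_{\phi_j}) = z_\tau P_{\phi_j} = 0$, completing the argument.
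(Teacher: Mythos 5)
Your first and last steps are sound: the hereditary claim is routine, and the concluding disjointness argument is correct --- since $\pi_\tau(A)''$ is a $\mathrm{II}_1$ factor (which, as you rightly flag, uses the infinite-dimensionality of $A$ implicit in the statement; the proposition is false for $A=M_n(\mathbb C)$), its commutant has no minimal projections, so $\pi_\tau$ admits no irreducible subrepresentation, is disjoint from each $\pi_{\phi_j}$, and $z_\tau P_{\phi_j}=0$ follows. That is a clean substitute for the paper's contradiction between a type $\mathrm I$ representation (obtained via Radon--Nikodym from the face $F_\phi$) and a $\mathrm{II}_1$ one.

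The genuine gap is the asserted identity $M^{**}=(1-P)A^{**}(1-P)$, which you state in passing but which carries essentially all of the content of the proposition. Two things are buried there. First, even for a single pure state, $\overline{H_j}^{\,w*}=(1-P_{\phi_j})A^{**}(1-P_{\phi_j})$ is equivalent to the support projection of a pure state being a \emph{closed} projection; this is true and citable (it is the Effros--Prosser correspondence between closed left ideals, open projections and weak$^*$-closed faces, cf.\ Pedersen 3.10.7 and 3.13.6), but it is not a formality. Second, and more seriously, passing to the intersection does not follow from general principles: the weak$^*$ closure of $\bigcap_j H_j$ is $qA^{**}q$ for some open projection $q\le\bigwedge_j(1-P_{\phi_j})=1-P$, and in noncommutative topology the infimum of finitely many open projections need not be open, so $q=1-P$ requires an argument. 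If $q$ were strictly smaller, your displayed computation would only yield $\bar\pi_\tau(M^{**})=\bar\pi_\tau(q)\pi_\tau(A)''\bar\pi_\tau(q)$, and you would still owe the inequality $q\ge z_\tau$; the orthogonality $z_\tau P=0$ alone does not give it. Proving $q=1-P$ --- equivalently, that the weak$^*$-closed face $\{\theta\in S(A):\theta(M)=0\}$ is exactly the face generated by $\frac1m\sum_j\phi_j$ --- is precisely what the paper's proof spends its effort on, via the Radon--Nikodym description of $F_\phi$ and the finite-dimensionality of the commutant of $\bigoplus_j\pi_{\phi_j}$. So your route is genuinely different and its endgame is arguably cleaner, but as written it front-loads the entire difficulty into one unproved equality.
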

\begin{proof}
Since $M= \cap_{ j \le m} L_{\phi_j} \cap L_{\phi_j}^*$,
the strong closure of $\pi_\tau(M)$ is a hereditary subalgebra of $\pi_\tau(A)''$,
therefore it is of the form $p\pi_\tau(A)''p$ for some projection $p \in \pi_\tau(A)''$.
Suppose $p$ is not the identity and let $\eta \in H_\tau$ be a unit vector orthogonal
to the range of $p$. Consider the state $\psi(a) = \langle \pi_\tau(a) \eta, \eta \rangle$.
By uniqueness of the GNS representation, $(\pi_\psi, H_\psi, \xi_\psi)$
is unitarily equivalent to $(\pi_\tau, \overline{\pi_\tau(A) \eta}, \eta)$.
Since $\pi_\tau(A)''$ is a type $\text{II}_1$ von Neumann algebra, the same is true for $\pi_\psi(A)''$
(see proposition 5.3.5 of \cite{dixmier}). 
Consider $a \in \cap_{j \le m} L_{\phi_j}$. Then $a^*a \in M$ and this implies
\[
\lVert \pi_\tau(a) p^{\perp} \rVert ^2 = \lVert p^{\perp} \pi_\tau(a^* a) p^{\perp} \rVert = 0
\]
hence $\pi_\tau(a) \eta = 0$, which means $\pi_\psi (a) \xi_\psi = 0$, which in turn entails
$L_\psi \supseteq \cap_{j \le m} L_{\phi_j}$. Consider the state $\phi = \sum_{j \le m} \frac{1}{m} \phi_j$,
which is such that $L_\phi = \cap_{j \le m} L_{\phi_j}$.
By the correspondence between closed left ideals and weak*-closed faces of $S(A)$ (see theorem
3.10.7 of \cite{ped}\footnote{Here we can consider faces of $S(A)$ instead of $Q(A)$ since $A$
is unital.}) we infer that $\psi$ is contained in the smallest weak*-closed face of $S(A)$ which
contains $\phi$, which is in fact the set
\[
\set{ \theta \in S(A) : \theta(L_\phi) = 0}
\]
On the other hand, the smallest face of $S(A)$ containing the state $\phi$ is
\[
F_\phi  = \set{ \theta \in S(A) : \exists \lambda > 0 \ \theta \le \lambda \phi}
\]
By the Radon-Nikodym theorem (theorem 5.1.2 in \cite{murphy}),
for every state $\theta$ contained
in $F_\phi$, the GNS representation $(\pi_\theta, H_\theta)$ is
(unitarily equivalent to) a subrepresentation of
$(\pi_\phi, H_\phi)$. Since the latter representation is type I (it is in fact the subrepresentation
of a direct sum of irreducible representations),
we get to a contradiction if we can prove that $F_\phi$ is weakly*-closed,
since this would imply that $(\pi_\psi, H_\psi)$ is type I. By Radon-Nikodym
theorem the map
\begin{align*}
\Theta_\phi : \pi_\phi(A)' &\to A^* \\
v & \mapsto \langle \pi_\phi( \ ) v \xi_\phi, \xi_\phi \rangle
\end{align*}
is a linear map such that $\Theta_\phi(\pi_\phi(A)') \cap S(A) = F_\phi$.
Let $\pi$ denote $\oplus_{ i \le m} \pi_{\phi_i}$. We
prove that $\pi(A)'$ is finite-dimensional, which entails that also
$\pi_\phi(A)'$ is finite-dimensional, since $\pi_\phi(A)' = q \pi(A)' q$ for some projection
$q \in \pi(A)'$.
This follows from the contents of Chapter 5 of \cite{dixmier}.
More specifically, if $\phi_1,\dots ,\phi_n$ are equivalent pure states, given
$\pi' = \oplus_{ i \le n} \pi_{\phi_i}$, then $\pi'(A)'$ is a type $\text{I}_n$-factor
by proposition 5.4.7 of \cite{dixmier}, thus it is finite dimensional. 
By theorem 3.8.11 \cite{ped}, the commutant $\pi(A)'$ is therefore the direct sum
of a finite number of finite-dimensional type I factors.
\end{proof}

\begin{corollary} \label{corollary:B}
Let $A$ be an infinite-dimensional, simple, unital \cstar-algebra and $\tau \in T(A)$. Let
$\set{(\pi_i, H_i)}_{i \le n}$ be inequivalent irreducible representations, $F_i \subset H_i$
finite sets and $T_i \in \mathcal{B}(H_i)$ projections. For every $\epsilon > 0$ there is
$b \in A_+$ such that
\begin{enumerate}
\item $\lVert b \rVert < 9$,
\item $\pi_i(b) \restriction_{F_i} \approx_{\epsilon}
T_i \restriction_{F_i}$ for all  $i \le n$,
\item
\begin{enumerate}
\item 
$\lVert \label{l:i3a} b \rVert_{2,\tau} < \epsilon$;
\item \label{l:i3b} If moreover $A$ is nuclear and
$M_2(\C)$
embeds into $A^{\mathcal{U}} \cap A'$, then $\lVert b \rVert_{2,u} < \epsilon$. 
\end{enumerate}
\end{enumerate}
\end{corollary}
\begin{proof}
By the Glimm-Kadison transitivity theorem (see \cite[Corollary 7]{glimm-kadison})
there is $b' \in A_{sa}$ whose norm is smaller than 3/2 such that, for all $i \le n$
\[
\pi_i(b') \restriction_{F_i \cup T_i[F_i]} = T_i \restriction_{F_i \cup T_i[F_i]}.
\]
Define for each $i \le n$ the set
\[
L_i = \set{ a \in A: \pi_i(a) \xi = 0 \ \forall \xi \in F_i \cup T_i[F_i]}.
\]
Let $L$ be the intersection of all $L_i$'s. For every $\tau \in T(A)$, by proposition
\ref{lemma:1} the set $\pi_\tau[L]$ is strongly dense in $\pi_\tau[A]''$.
There is thus a self-adjoint $a_\tau \in L$ such that
$\lVert b' - a_\tau \rVert_{2,\tau} < \epsilon/6$. For every $\tau \in T(A)$ define $ b'_\tau:=b' -
a_\tau$. Notice that $\pi_i(b'_\tau) \restriction_{F_i \cup T_i[F_i]} =
T_i \restriction_{F_i  \cup T_i[F_i]}$ for all $i \le n$, and that $\lVert b'_\tau \rVert_{2, \tau} < \epsilon$,
so the lemma for item \ref{l:i3a} has been proved. From now on, we assume that $A$ is nuclear and that
$M_2(\C)$
embeds into $A^{\mathcal{U}} \cap A'$, and thus that $A$ has CPoU, by theorem \ref{cpou}.
By Kaplansky's density theorem we can assume that $\lVert b'_\tau \rVert < 3$ for every $\tau \in T(A)$.
Let $b_\tau$ be the square of $b'_\tau$. We have that, for every $\tau \in T(A)$
\begin{enumerate}[i.]
\item $b_\tau \in A_+$,
\item $\lVert b_\tau \rVert < 9$,
\item $\lVert b_\tau \rVert_{2,\tau} < \epsilon/2$,
\item $\pi_i(b_\tau) \restriction_{F_i} = T_i \restriction_{F_i }$ for all $i \le n$ (this is the case since all $T_i$'s
are projections).
\end{enumerate}
By compactness of $T(A)$ there exists a finite
set $\set{b_j}_{j \le k} \subset A_+$ of elements whose norm is smaller than $9$ such that
$\sup_{\tau \in T(A)} \min_{j \le k} \lVert b_j \rVert_{2, \tau} < \epsilon/2$ and $\pi_i(b_j) \restriction_{F_i} = T_i \restriction_{F_i}$ for every $j \le k$ and $i \le n$.
Since $A$ has CPoU, for every $\gamma >0$
we can find some contractions $e_1, \dots, e_k \in A_+$ such that
\begin{enumerate}
\item $\sum_{j \le k} e_j = 1$,
\item for every $\tau \in T(A)$ and all $j,h \le k$
\begin{enumerate}
\item $\lVert[e_j, b_h] \rVert < \gamma$,
\item $\lVert e_j e_h \rVert_{2, \tau} < \gamma$,
\item $\lVert e_j^2 - e_j \rVert_{2, \tau} < \gamma$,
\item $\tau(e_j b^2_j) < (\epsilon^2/4)\tau(e_j) + \gamma$.
\end{enumerate}
\end{enumerate}
Let $b:= \sum_{j \le k} e_j^{1/2} b_j e_j^{1/2}$ for $\gamma > 0$ small enough ($\gamma$
also depends on $k$) so that
\begin{enumerate}[i.]
\item $\lVert b \rVert < 9$,
\item $\pi_i(b) \restriction_{F_i} \approx_{\epsilon} \sum_{j \le k} \pi_i(e_j) \pi_i(b_j) \restriction_{F_i} =
\sum_{j \le k} \pi_i(e_j) T_i \restriction_{F_i} = T_i \restriction_{F_i}$ for all $i \le n$,
\item for every $\tau \in T(A)$
\[
\lVert b \rVert_{2,\tau}^2 = \tau (\sum_{j,h \le k} e_jb_jb_h e_h) \approx_{f(\gamma)}
\tau( \sum_{j \le k} e_j b^2_j) < \epsilon^2/4 \sum_{j \le k} (\tau(e_j) + \gamma) < \epsilon^2.
\]
\end{enumerate}
\end{proof}

The following proposition is implicitly used in \cite[Theorem 3.1]{KOS}. We give here a full proof of it.
\begin{proposition} \label{prop:B2}
For every $\epsilon > 0$ and $M \in \N$ there is $\delta >0$ such that the following holds.
Suppose $\xi$ is a norm one vector in an infinite-dimensional Hilbert space $H$, and that
$\set{b_j}_{j \le M} \subseteq B(H)$ are such that $\sum_{j } b_j b^*_j \le 1$
and $\sum_{j} b_j b^*_j \xi = \xi$. Let moreover $\eta \in H$ be a unit vector orthogonal
to the linear span of $\set{b_j b_k^* \xi : j,k \le M}$ such that, for all $j,k \le M$
\[
\lvert \langle b_k^* \xi , b_j^* \xi \rangle - \langle b_k^* \eta , b_j^* \eta \rangle \rvert < \delta
\]
Then there is a projection $q \in B(H)$ such that
\[
\sum_{j \le M} b_j q b_j^* (\eta + \xi) \approx_{\epsilon} 0 \ \ \text{and} \ \
\sum_{j \le M} b_j q b_j^* (\eta - \xi) \approx_{\epsilon} \eta - \xi
\]
\end{proposition}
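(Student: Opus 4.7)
The plan is to produce $q$ as a singular-value truncation built from the vectors $v_j := b_j^*\xi_-$, where $\xi_\pm := (\xi \pm \eta)/\sqrt{2}$. First I would record the main consequences of the hypotheses. Since $\xi = \sum_j b_jb_j^*\xi$ lies in the span that $\eta$ is orthogonal to, $\xi \perp \eta$; and for every $j,k$ the same orthogonality gives $\langle b_j^*\xi, b_k^*\eta\rangle = \langle b_kb_j^*\xi,\eta\rangle = 0$. Expanding then yields
\[
\langle u_j, v_k\rangle = \tfrac{1}{2}\bigl(\langle b_j^*\xi, b_k^*\xi\rangle - \langle b_j^*\eta, b_k^*\eta\rangle\bigr)
\]
with $u_j := b_j^*\xi_+$, so that $|\langle u_j, v_k\rangle| < \delta/2$. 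Moreover, writing $P := \sum_j b_jb_j^*$, the inequality $\sum_j\|b_j^*\eta\|^2 = \langle P\eta,\eta\rangle \ge 1 - M\delta$ combined with $\|P\eta\|^2 \le \langle P\eta,\eta\rangle$ (from $0 \le P \le I$, whence $P^2 \le P$) gives $\|(I-P)\eta\|^2 \le M\delta$.

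Next I would take the singular value decomposition of the operator $B\colon\mathbb{C}^M \to H$ sending $e_j \mapsto v_j$, producing left singular vectors $w_1,\dots,w_d$ and singular values $\sigma_1 \ge \cdots \ge \sigma_d > 0$; fix a threshold $\tau > 0$ to be chosen later, and let $q$ be the projection onto $\mathrm{span}\{w_l : \sigma_l > \tau\}$. The key structural point is that each such $w_l = \sigma_l^{-1}\sum_k(Ve_l)_k v_k$ has coefficient vector of $\ell^2$-norm $1/\sigma_l < 1/\tau$, so by Cauchy--Schwarz $|\langle u_j, w_l\rangle| \le \sqrt{M}\,\delta/(2\sigma_l) \le \sqrt{M}\,\delta/(2\tau)$.

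Setting $\Phi(x) := \sum_j b_j x b_j^*$, the two desired conclusions become $\|\Phi(q)\xi_+\| < \epsilon/\sqrt{2}$ and $\|\Phi(q)\xi_- - \xi_-\| < \epsilon/\sqrt{2}$, since $\sum_j b_j q b_j^*(\eta\pm\xi) = \pm\sqrt{2}\,\Phi(q)\xi_\pm$. Both are controlled via $0 \le \Phi(q) \le I$, reducing them to the diagonal matrix elements $\sum_j\|qb_j^*\xi_\pm\|^2$. Cutting at threshold $\tau$ removes components of each $v_j$ only along directions with $\sigma_l \le \tau$, hence $\sum_j\|(I-q)v_j\|^2 \le M\tau^2$; combined with $\sum_j\|v_j\|^2 \ge 1 - M\delta/2$ this produces $\|\Phi(q)\xi_- - \xi_-\|^2 \le M(\delta/2 + \tau^2)$. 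The inner-product bound above gives $\|\Phi(q)\xi_+\|^2 = \sum_j\sum_{\sigma_l>\tau}|\langle u_j,w_l\rangle|^2 \le M^3\delta^2/(4\tau^2)$.

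The main obstacle is the trade-off embodied by $\tau$: decreasing $\tau$ helps clause (b) but worsens the $1/\tau$ blowup in clause (a), while the singular values $\sigma_l$ can a priori be arbitrarily small. Choosing $\tau$ of order $\epsilon/\sqrt{M}$ and then $\delta$ of order $\epsilon^2/M^2$ makes both errors at most $\epsilon$, producing the required $\delta = \delta(\epsilon, M)$. Apart from this singular-value bookkeeping the argument is pure Hilbert-space geometry and uses no structure of the $b_j$ beyond the stated hypotheses.
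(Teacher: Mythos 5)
Your argument is correct, but it takes a genuinely different route from the paper. The paper proves Proposition \ref{prop:B2} by invoking Lemma 3.3 of \cite{FKK}: since the tuples $(b_j^*\xi)_j$ and $(b_j^*\eta)_j$ have approximately equal Gram matrices and span orthogonal subspaces, there is a \emph{self-adjoint} unitary $w$ approximately swapping them, and then $q=(1-w)/2$ does the job because $\sum_j b_j w b_j^*$ approximately interchanges $\xi$ and $\eta$ while $\sum_j b_j b_j^*$ approximately fixes both. You instead pass to the rotated vectors $\xi_\pm=(\xi\pm\eta)/\sqrt{2}$, observe that the hypotheses say exactly that $u_j=b_j^*\xi_+$ and $v_k=b_k^*\xi_-$ are almost orthogonal tuples with $\sum_j\|v_j\|^2$ almost $1$, and build $q$ as a singular-value truncation of the operator $e_j\mapsto v_j$; the two estimates then reduce, via $0\le\Phi(q)\le\Phi(1)\le 1$ and hence $\Phi(q)^2\le\Phi(q)$, to the diagonal sums $\sum_j\|qu_j\|^2$ and $\sum_j\|(1-q)v_j\|^2$, and the threshold $\tau$ balances the $1/\tau$ loss in the first against the $\tau^2$ loss in the second. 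I checked the key steps ($\xi\perp\eta$ and $\langle b_j^*\xi,b_k^*\eta\rangle=0$ from the orthogonality hypothesis, the identity $\langle u_j,v_k\rangle=\tfrac12(\langle b_j^*\xi,b_k^*\xi\rangle-\langle b_j^*\eta,b_k^*\eta\rangle)$, the bounds $\sum_j\|(1-q)v_j\|^2\le M\tau^2$ and $\sum_j\|qu_j\|^2\le M^3\delta^2/(4\tau^2)$, and the final inequality $\|\Phi(q)\xi_--\xi_-\|^2\le 1-\langle\Phi(q)\xi_-,\xi_-\rangle$), and they all go through. What your version buys: it is self-contained (no appeal to \cite{FKK}), it yields an explicit polynomial dependence $\delta\sim\epsilon^2/M^2$ rather than the inexplicit $\delta'$ of the quoted lemma, and it does not use infinite-dimensionality of $H$. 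What the paper's version buys is brevity and a structurally transparent $q$ (a spectral projection of an approximate swap), which matches the heuristic that $\exp(i\pi\sum_j b_jqb_j^*)$ should act as that swap in the subsequent application.
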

\begin{proof}
By lemma 3.3 of \cite{FKK}, for every $\epsilon' > 0$ and $M' \in \N$ there is a $\delta' > 0$
such that if $(\xi_1, \dots, \xi_{M'})$ and $(\eta_1, \dots, \eta_{M'})$ are two sequences of vectors
in a Hilbert space $H$ such that $\sum_i \lVert \xi_i \rVert^2 \le 1$,
$\sum_i \lVert \eta_i \rVert^2 \le 1$, and
\[
\lvert \langle \xi_i , \xi_j \rangle - \langle \eta_i, \eta_j \rangle \rvert < \delta ' \ \forall i,j \le M'
\]
then there is a unitary $U \in B(H)$ such that
\[
\lVert U\xi_j - \eta_j \rVert < \epsilon' \ \forall j \le M'
\]
Moreover, if $H$ is infinite dimensional and $\langle \xi_i , \eta_j \rangle =0$ for all
$i, j \le M'$, then $U$ can be chosen to be self-adjoint.
Let $\delta > 0$ be smaller than $\epsilon /M$ and than the $\delta'$
given by lemma 3.3 of \cite{FKK}
for $M'=M$ and $\epsilon' =\epsilon /M$. Fix
$\xi$, $\eta$ and $b_j$ for $j \le M$ as in the statement of the current proposition.
Since the linear spans of $\set{b_j^* \xi : j \le M}$ and $\set{b_j^* \eta : j \le M}$
are orthogonal, there is a self-adjoint unitary
$w$ on $H$ such that, for every $j \le M$
\[
\lVert w b_j^* \xi - b_j^* \eta \rVert < \epsilon /2M
\]
\[
\lVert w b_j^* \eta - b_j^* \xi \rVert < \epsilon /2M
\]
This entails, since $\lVert b_j \rVert \le 1$ for all $j \le M$, $\lVert b_j w b_j^* \xi - b_j b_j^* \eta
\rVert < \epsilon /2M$, therefore
\[
\lVert \sum_{ j \le M} b_j w b_j^* \xi - \sum_{j \le M} b_j b_j^* \eta \rVert < \epsilon /2
\]
Similarly we have
\[
\lVert \sum_{ j \le M} b_j w b_j^* \eta - \sum_{j \le M} b_j b_j^* \xi \rVert < \epsilon /2
\]
Moreover $\sum_{j} b_j b^*_j \xi = \xi$ and $\delta < \epsilon /M$ imply
$\sum_{j} b_j b^*_j \eta \approx_\epsilon \eta$. Thus, if $q$ is the projection $(1- w)/2$,
it follows that
\[
\sum_{j \le M} b_j q b_j^* (\eta + \xi) \approx_{\epsilon} 0 \  \ \text{and} \ \
\sum_{j \le M} b_j q b_j^* (\eta - \xi) \approx_{\epsilon} \eta - \xi
\]
\end{proof}

\begin{proposition} \label{prop:B}
For every $\epsilon >0$ and $N > 0$ there exists $\delta > 0 $ such that for every
self-adjoint element $a$ of norm smaller than $N$ on a Hilbert space $H$, every $r \in [-N, N]$,
and all unit vectors $\xi \in H$, we have the following. If $ r \xi \approx_\delta a \xi $
then $ \exp(i \pi r) \xi \approx_\epsilon \exp(i \pi a)\xi$.
\end{proposition}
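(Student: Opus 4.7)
The plan is to leverage the fact that $a$ is self-adjoint so that one has strong continuity of the functional calculus. The cleanest route goes through the spectral theorem: writing $a = \int \lambda \, dE(\lambda)$ and letting $\mu_\xi$ denote the corresponding spectral measure at $\xi$, we have
\[
\lVert \exp(i\pi a)\xi - \exp(i\pi r)\xi \rVert^2 = \int \lvert e^{i\pi\lambda} - e^{i\pi r} \rvert^2 \, d\mu_\xi(\lambda).
\]
Combining this with the elementary inequality $\lvert e^{i\pi\lambda} - e^{i\pi r}\rvert \le \pi\lvert\lambda - r\rvert$ gives $\lVert \exp(i\pi a)\xi - \exp(i\pi r)\xi\rVert \le \pi \lVert (a - r)\xi\rVert < \pi\delta$, so in fact $\delta := \epsilon/\pi$ already works, without any dependence on $N$.

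For a proof that stays entirely within elementary norm estimates and matches the $N$-dependent form of the statement, I would instead expand both sides as power series and control them term by term. The telescoping identity
\[
a^k\xi - r^k\xi = a\bigl(a^{k-1}\xi - r^{k-1}\xi\bigr) + (a - r)\bigl(r^{k-1}\xi\bigr)
\]
combined with the bounds $\lVert a \rVert < N$ and $\lvert r \rvert \le N$ yields, by induction on $k$, the estimate $\lVert a^k\xi - r^k\xi\rVert \le k N^{k-1}\lVert (a - r)\xi\rVert$. Summing against the coefficients $\pi^k/k!$ of the exponential series and recognizing $\sum_{k \ge 1}(\pi N)^{k-1}/(k-1)! = \exp(\pi N)$ yields
\[
\lVert \exp(i\pi a)\xi - \exp(i\pi r)\xi \rVert \le \pi \lVert (a - r)\xi\rVert \exp(\pi N),
\]
so choosing $\delta := \epsilon / (\pi\exp(\pi N))$ suffices.

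There is no serious obstacle here; the only point of care is to apply the telescoping identity so that $a$ appears only on the factor $(a^{k-1}\xi - r^{k-1}\xi)$, allowing the norm bound $\lVert a\rVert < N$ to propagate through the induction without introducing extra factors. Self-adjointness of $a$ is not needed for the power-series argument, but it is essential for the slicker spectral-theoretic proof and is of course what makes $\exp(i\pi a)$ a unitary, which is presumably how this proposition will feed into the construction of the paths of unitaries in lemma \ref{lemma:2+}.
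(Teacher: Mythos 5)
Your proposal is correct, and both of your arguments differ from the one in the paper. The paper proceeds by polynomial approximation: it picks a polynomial $p$ with $\lVert p - \exp(i\pi\,\cdot\,)\rVert_\infty < \epsilon/3$ on $[-N,N]$, observes that for a fixed polynomial one can find $\delta$ (depending on $\epsilon$, $N$ and $p$) so that $a\xi \approx_\delta r\xi$ implies $p(a)\xi \approx_{\epsilon/3} p(r)\xi$, and chains three $\epsilon/3$ estimates; the ``straightforward'' middle step there is essentially your telescoping bound applied to the finitely many monomials of $p$. Your power-series argument makes that telescoping global and explicit, yielding the concrete constant $\delta = \epsilon/(\pi\exp(\pi N))$ with no appeal to Stone--Weierstrass; the induction is set up correctly (the estimate $\lVert a^k\xi - r^k\xi\rVert \le kN^{k-1}\lVert(a-r)\xi\rVert$ checks out) and the summation against $\pi^k/k!$ is right. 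Your spectral-theoretic argument is sharper still: since $\lvert e^{i\pi\lambda}-e^{i\pi r}\rvert \le \pi\lvert\lambda - r\rvert$ for all real $\lambda, r$, integrating against the spectral measure of $\xi$ gives $\lVert \exp(i\pi a)\xi - \exp(i\pi r)\xi\rVert \le \pi\lVert(a-r)\xi\rVert$, so $\delta = \epsilon/\pi$ works uniformly in $N$ --- a genuine improvement over the statement as given, though the $N$-dependent version is all that the application in lemma \ref{lemma:2+} requires. Your closing remark is also accurate: self-adjointness is what licenses the spectral calculus and makes $\exp(i\pi a)$ unitary, but the norm-series estimate itself never uses it.
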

\begin{proof}
Fix $\epsilon, N >0$ and
let $p(x)$ be a polynomial such that
\[
\lVert (p(x) - \exp(i \pi x))_{\restriction [-N,N]} \rVert_\infty < \epsilon/3
\]
It is straightforward to find $\delta > 0$ (depending only on $\epsilon, N$ and $p(x)$)
such that $a \xi \approx_{\delta} r \xi$ implies
$p(r) \xi \approx_{\epsilon/3} p(a) \xi$. Thus we have
\[
\exp(i \pi r) \xi \approx_{\epsilon/3} p(r) \xi \approx_{\epsilon/3} p(a) \xi
\approx_{\epsilon/3} \exp(i \pi a)\xi
\]
\end{proof}

\begin{proof}[Proof of lemma \ref{lemma:2+}]
It is sufficient to show the following claim.
\begin{claim}
Let $A$ be a separable simple unital \cstar-algebra, $(\phi_h)_{h \le m}$
some inequivalent pure states and $\set{\tau_1, \dots, \tau_n} \subseteq \partial T(A)$.
For every finite $  F \Subset A$ and $ \epsilon > 0$, there exist a finite
$G \Subset A$ and $\delta >0$ such that the following holds.
Suppose $(\psi_h)_{h \le m}$ are pure states such that
$\psi_h \sim \phi_h$, and that moreover $\psi_h \approx_{G, \delta} \phi_h$ for
all $1 \le h \le m$.
Then there exists a path of unitaries $(u_t)_{t \in [0,1]}$ in $A$
satisfying the following
\begin{enumerate}
\item \label{lemma:i1} $ u_0 = 1$,
\item $\phi_h \circ \text{Ad}(u_1)= \psi_h$ for all $1 \le h \le m$,
\item $\lVert b - \text{Ad}(u_t)(b) \rVert < \epsilon$ for all $b \in F$,
\item 
\begin{enumerate}
\item \label{lemma:i3a} $\lVert u_t - 1 \rVert_{2,k} < \epsilon$ for all $k \le n$;
\item \label{lemma:i3b} If moreover $A$ is nuclear and
$M_2(\C)$
embeds into $A^{\mathcal{U}} \cap A'$, then $\lVert u_t - 1 \rVert_{2,u} < \epsilon$.
\end{enumerate}
\end{enumerate}
\end{claim}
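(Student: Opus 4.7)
My plan is to adapt the construction of \cite[Lemma 2.2]{KOS}, which builds $(u_t)_{t \in [0,1]}$ as a convergent infinite product $u_t = \lim_N \prod_{n \le N} \exp(i \pi t h_n)$ with $h_n \in A_{sa}$ of rapidly decreasing norm, chosen so that the partial products progressively implement the pure-state equivalences $\phi_h \mapsto \psi_h$ while approximately commuting with $F$. The only ingredient to add is that each $h_n$ also be small in each trace $2$-norm $\lVert \cdot \rVert_{2,k}$. From the identity $\exp(i \pi t h_n) - 1 = \int_0^t i \pi h_n \exp(i \pi s h_n) \, ds$ together with the tracial identity $\lVert h_n u \rVert_{2,k} = \lVert h_n \rVert_{2,k}$ for $u$ unitary, one obtains $\lVert \exp(i \pi t h_n) - 1 \rVert_{2,k} \le \pi t \lVert h_n \rVert_{2,k}$. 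Telescoping along the finite product (using that left multiplication by a unitary is a $\lVert \cdot \rVert_{2,k}$-isometry) reduces clause (4) to arranging $\sum_n \lVert h_n \rVert_{2,k} < \epsilon / \pi$ for each $k \le n$, uniformly in $t \in [0,1]$.

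The main work is to refine the Glimm-Kadison step in \cite{KOS}. At stage $n$, the original argument selects a self-adjoint $k_n \in A$ of small norm with prescribed action on finite sets $F_h^{(n)} \subset H_{\phi_h}$ for every $h \le m$. We need such a $k_n$ to additionally satisfy $\lVert k_n \rVert_{2,j} < \epsilon/(2^n \pi)$ for all $j \le n$. Since $\lVert a \rVert_{2,j}^2 = \lVert \pi_{\tau_j}(a) \xi_{\tau_j} \rVert^2$, this amounts to making the vectors $\pi_{\tau_j}(k_n) \xi_{\tau_j} \in H_{\tau_j}$ simultaneously small. For this I would invoke Proposition \ref{lemma:1} and Corollary \ref{corollary:B}: the hereditary algebra $M = \bigcap_h L_{\phi_h} \cap L_{\phi_h}^*$ supplies admissible self-adjoint perturbations that leave the Glimm-Kadison prescription untouched (because elements of $M$ kill $\pi_{\phi_h}(a) \xi_{\phi_h}$ for every $a$), while $\pi_{\tau_j}(M)$ is strongly dense in $\pi_{\tau_j}(A)''$ for each $j$. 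A simultaneous-approximation argument, applied to the direct sum $\bigoplus_{j \le n} \pi_{\tau_j}$ (into which the strong density of $M$ lifts summand-by-summand), then produces a self-adjoint $m_n \in M$ of controlled operator norm for which $\pi_{\tau_j}(k_n + m_n) \xi_{\tau_j}$ is small for every $j \le n$. Setting $h_n := k_n + m_n$ yields the desired element.

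The main obstacle I anticipate is the simultaneous reconciliation at each stage of the three competing constraints: the exact finite-dimensional pure-state action (needed for the infinite product to converge to a unitary sending $\phi_h$ \emph{exactly} to $\psi_h$, so that condition (2) holds with equality rather than just approximately), the approximate commutation with $F$ (clause (3)), and the new trace $L^2$-control (clause (4)), all while maintaining a norm bound on $h_n$ tight enough for the infinite product to converge in $\mathcal{U}(A)$. Self-adjointness of $m_n$ is preserved since $M = M^*$, and operator-norm control can be obtained via Kaplansky density inside each $\pi_{\tau_j}(A)''$. The conceptual content is that Proposition \ref{lemma:1} makes the pure-state prescription and the trace $L^2$-smallness genuinely independent constraints: precisely because the irreducible representations $\pi_{\phi_h}$ and the $\text{II}_1$-factor representations $\pi_{\tau_j}$ of a simple unital $C^\ast$-algebra are ``disjoint enough'', elements of $A$ acting trivially at the pure-state GNS vectors still fill out every $\pi_{\tau_j}(A)''$ strongly, which is what renders clause (4) compatible with the original KOS induction scheme.
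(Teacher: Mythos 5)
Your overall strategy --- exploit the strong density of ``prescription-preserving'' elements in $\pi_{\tau_j}(A)''$ to make the self-adjoint generators of the path additionally small in each $\lVert\cdot\rVert_{2,j}$ --- is exactly the insight the paper's proof is built on (proposition \ref{lemma:1} and corollary \ref{corollary:B}), and your reduction of clause (4) to $\sum_n\lVert h_n\rVert_{2,k}<\epsilon/\pi$ via the integral identity and the right-invariance of $\lVert\cdot\rVert_{2,k}$ under unitaries is sound. But there are two genuine gaps. First, your justification that perturbations from $M=\bigcap_h L_{\phi_h}\cap L_{\phi_h}^*$ ``leave the Glimm--Kadison prescription untouched because elements of $M$ kill $\pi_{\phi_h}(a)\xi_{\phi_h}$ for every $a$'' does not hold: an element $m\in M$ is only guaranteed to satisfy $\pi_{\phi_h}(m)\xi_{\phi_h}=\pi_{\phi_h}(m^*)\xi_{\phi_h}=0$; it certainly does not annihilate $\pi_{\phi_h}(a)\xi_{\phi_h}$ for every $a$ (that would force $m=0$ by simplicity), while the prescription at each stage is on strictly larger finite sets of vectors (the $\pi(b_j^*)\xi_h$, the Glimm-lemma approximants $\zeta_h$ of the $\psi_h$-vectors, their images under the candidate projection, and so on). This is repairable --- it is precisely what corollary \ref{corollary:B} is for, with the left ideals $L_i$ built from the full finite sets $F_i$ rather than from the cyclic vectors alone --- but as written your perturbation would wreck the vector prescriptions and hence clause (2).

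Second, and more seriously, the additive perturbation $h_n:=k_n+m_n$ destroys clause (3): strong density gives no control whatsoever on $\lVert[m_n,b]\rVert$ for $b\in F$, and there is no density statement for elements that are simultaneously in the relevant hereditary algebra, small in every $\lVert\cdot\rVert_{2,j}$, \emph{and} approximately central. You name this reconciliation as ``the main obstacle'' but supply no mechanism for it. The paper's resolution is to place the trace-small elements \emph{inside} the approximate-diagonal sandwich rather than adding them on afterwards: with $\set{b_j}_{j\le M}$ an approximate diagonal for $\bigoplus_h\pi_{\phi_h}$ chosen against $F$, one takes the Glimm--Kadison element $a$ and, for each $k\le n$, an $a_k$ from corollary \ref{corollary:B} with $\lVert a_k\rVert_{2,k}$ tiny and agreeing with the projection $q_h$ of proposition \ref{prop:B2} on the finite sets $S_h$, and sets $\overline a=\sum_j b_j a_1\cdots a_n a^2 a_n\cdots a_1 b_j^*$. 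Approximate commutation with $F$ then comes from the $b_j$'s alone, independently of what sits inside the sandwich; the action on the vectors $\xi_h,\zeta_h$ is unchanged because every factor acts like $q_h$ there; and the trace smallness is extracted for each $k$ separately by cycling $\tilde a_k$ to the front under $\tau_k$ and applying Cauchy--Schwarz, giving $\tau_k(\overline a^2)\le{\epsilon'}^2$ and hence clause (4). Without this device (or an equivalent one) your three constraints remain unreconciled, so the proposal is not yet a proof.
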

Given $m \in \N$ and $\vec{\phi} \in P(A)^m$, the set
\[
\set{ \vec{\phi} \circ \text{Ad}(u) : u \in \mathcal{U}(A)}
\]
is weak* dense in $P(A)^m$ (see \cite[Lemma 2.3]{FKK}). Hence, the statement of lemma \ref{lemma:2+} follows from the claim (see \cite[Lemma 2.2]{KOS} for details).

We start by showing the existence of a path of unitaries satisfying
items \ref{lemma:i1}-\ref{lemma:i3a} of the claim.
By an application of the Glimm-Kadison transitivity theorem, there exists
$\epsilon'' > 0 $ such that if $(\theta_h)_{h \le m}$ are inequivalent pure states
and $(\chi_h)_{h \le m}$ are pure states such that $\lVert \theta_h - \chi_h \rVert < \epsilon''$,
then there is a path of unitaries $(v_t)_{t \in [0,1]}$
which satisfies the following
\begin{itemize}
\item $\theta_h \circ \text{Ad}(v_1) = \chi_h$ for all $ 1 \le h \le m$,
\item $\lVert v_t - 1 \rVert < \epsilon/8 $ for all $t \in [0,1]$.
\end{itemize}
In fact for every $h \le m$, if $\lVert \theta_h - \chi_h \rVert$ is small enough, $\theta_h$ and $\chi_h$
are two vector states on $H_{\theta_h}$ induced by two vectors $\xi_{\theta_h}$ and $\zeta_{\chi_h}$
which can be chosen to be very close (depending on $\lVert \theta_h - \chi_h \rVert$). Hence
there is $u_h \in \mathcal{U}(B(H_{\theta_h}))$ which sends $\xi_{\theta_h}$ to $\zeta_{\chi_h}$ and is very
close to the identity of $B(H_{\theta_h})$, which in turn implies that $u_h = \exp(ia_h)$ 
for some $a_h \in B(H_{\theta_h})_{sa}$ whose norm is close to zero. Given
the representation $\pi = \bigoplus_{h \le m}
\pi_{\theta_h}$ on $H = \bigoplus_{h \le m} H_{\theta_h}$, by Glimm-Kadison transitivity theorem
there is $b \in B(H)_{sa}$ which
behaves like $a_h$ on $\xi_{\theta_h}$ for every $h \le m$, and whose norm is close zero.
The required path is $(v_t)_{t \in [0,1]}$, where $v_t = \exp (itb)$. Fix such $\epsilon''$.

Let $\epsilon' > 0$
be smaller than the $\delta$ provided by proposition \ref{prop:B} for $N=2^{2n}$ and
$\min \set{\epsilon''/2, \epsilon/4}$.
Let $(\pi_h, H_h, \xi_h)$ be the GNS representations associated to $\phi_h$, let
$(\pi, H)$ be the direct sum of them, and let $p \in B(H)$ be the projection onto the span
of the cyclic vectors $\xi_h$ for $h \le m$. The representation $\pi$
has an approximate diagonal since it is the direct sum of some inequivalent irreducible
representations (see section 4 of \cite{KOS}), thus there is a positive
integer $M$ and some $b_j \in A$ for $j \le M$ such that
\begin{itemize}
\item $\sum_j b_j b_j^* \le 1$,
\item $p ( 1- \sum_j \pi(b_j b_j^*)) = 0$,
\item $\sup_{c \in A, \lVert c \rVert \le 1} \lVert b \sum_j b_j c b_j^* - \sum b_j c b_j^* b \rVert <
\frac{\epsilon}{4} \frac{1}{e^{\pi 2^{2n}}2^{2n} }$ for all $b \in F$.
\end{itemize}
Fix $\delta = \delta'/2$, $\delta'$ being the value given by proposition \ref{prop:B2} for $M$
and $\epsilon'$.
Fix moreover
\[
G = \set{b_j b_k^* : j,k \le M}
\]
Suppose $\psi_h \sim \phi_h$ and $\psi_h \approx_{G, \delta} \phi_h$ for all $h \le m$.
For every $h \le m$
pick $w_h \in \mathcal{U}(A)$ such that $\phi_h \circ \text{Ad}(w_h) =
\psi_h$, and let $\eta_h$ denote the vector $w_h \xi_h$. By Glimm's lemma
(see lemma 1.4.11 in \cite{brownozawa}) there are, for every $h \le m$, $\zeta_h \in H_h$
unit vectors orthogonal to $\set{ \pi(b_j b_k^*) \xi_h, \pi(b_j b^*_k) \eta_h : j,k \le M}$
such that, if $\theta_h = \omega_{\zeta_h} \circ \pi_h$, we have $\theta_h \approx_{G, \delta}
\psi_h$ for every $h \le m$. As a consequence $\theta_h \approx_{G, \delta'}
\phi_h$ for all $h \le m$, which implies, for $j,k \le M$
\[
\lvert \langle \pi(b_k)^* \xi_h, \pi(b_j)^* \xi_h \rangle  - 
 \langle \pi(b_k)^* \zeta_h, \pi(b_j)^* \zeta_h \rangle \rvert < \delta'
\]
From an application of proposition \ref{prop:B2} for $\xi = \xi_h$,
$\eta = \zeta_h$ and $b_j = \pi_h(b_j)$, we obtain a projection $q_h \in B(H_h)$
such that $v_h = \exp(i \pi \sum_j b_j q_h b^*_j)$
satisfies $\zeta_h \approx_{\epsilon''/2} v_h \xi_h$.
By Glimm-Kadison transitivity theorem there is $a \in A^1_{sa}$ which agrees with $q_h$ on
$S_h = \text{span} \set{\pi(b^*_j)\xi_h, \pi(b^*_j)\zeta_h, \pi(q_h)\pi(b^*_j)\xi_h, \pi(q_h)\pi(b^*_j)\zeta_h
: j \le M}$ for every $h \le m$. For each $k \le n$ corollary \ref{corollary:B} (item \ref{l:i3a}) provides
one $a_k \in A_{sa}$ such that $\lVert a_k \rVert_{2,k} \le {\epsilon'}^2/(9^{4n} M)$,
which moreover agrees with $q_h$ on $S_h$ for all $h \le m$. We can assume moreover
that each $a_k$ has norm smaller than 9. Define
$\overline{a}$ to be the sum $\sum_j b_j a_1 \dots a_n a^2 a_n \dots a_1 b^*_j$.
This is a positive element whose norm is smaller than $2^{2n}$. Define
$u_t$ for $t \in [0,1]$ to be
$\exp ( i t \pi \overline{a})$.
Thus, combining proposition \ref{prop:B} with the previous construction, we get 
$\lVert \pi(u_1) \xi_h - \zeta_h \rVert < \epsilon''/2$ for all $h \le m$.
This implies $\lVert \phi_h - \theta_h \rVert < \epsilon''$.
Moreover for all $b \in F$ we have
\[
\lVert [u_t, b] \rVert \le e^{\pi \lVert \overline{a} \rVert} \lVert [\overline{a}, b] \rVert
\le \epsilon /4
\]
Finally, let $\tilde{a}_k$ be $a_k / \lVert a_k \rVert$. Then for each $k \le n$ we can show that
\[
\tau_k(\overline{a}^2) \le 9^{4n} \sum_{j \le M} \tau_k (b_j \tilde{a}_1 \dots \tilde{a}_n a^2 \tilde{a}_n \dots
\tilde{a}_1 b^*_j) =
\]
\[
= 9^{4n} \sum_{j \le M} \tau_k(\tilde{a}_k \dots \tilde{a}_n a^2 \tilde{a}_n \dots \tilde{a}_1 b^*_j b_j
\tilde{a}_1 \dots \tilde{a}_{k-1}) \le
\]
\[
\le 9^{4n} \sum_{j \le M} [\tau_k((\tilde{a}_k \dots \tilde{a}_n a^2 \tilde{a}_n \dots \tilde{a}_k)^2]^{1/2}
[\tau_k((\tilde{a}_{k-1} \dots \tilde{a}_1 b^*_j b_j \tilde{a}_1 \dots \tilde{a}_{k-1})^2]^{1/2} \le
\]
\[
\le 9^{4n} \sum_{j \le M} \tau_k(\tilde{a}_k \dots \tilde{a}_n a^2 \tilde{a}_n \dots \tilde{a}_k)^{1/2}
\le 9^{4n} \sum_{j \le M} \tau_k(\tilde{a}_k^2)^{1/2} \le {\epsilon'}^{2}
\]
Therefore $\lVert \overline{a} \rVert_{2,k} \le \epsilon'$, thus $\lVert u_t -1 \rVert_{2,k}
\le \epsilon/4$.
Perform the same construction between $(\theta_h)_{h \le m}$ and $(\psi_h)_{h \le m}$
to find a path of unitaries $(v_t)_{t \in [0,1]}$ such that
$\lVert \psi_h \circ \text{Ad}(v_1) - \theta_h \rVert < \epsilon''$ for all $h \le m$, 
$\lVert [v_t, b] \rVert  \le \epsilon /4$ for all $t \in [0,1]$ and $b \in F$ and finally such that
$\lVert v_t -1 \rVert_{2,k} \le \epsilon/4$ for all $t \in [0,1]$ and $k \le n$.
The argument sketched at the
beginning of the proof allows to find two paths of untaires $(u'_t)_{t \in [0,1]}$, $(v'_t)_{t \in [0,1]}$
such that $\phi_h \circ \text{Ad} (u_1 u'_1) = \theta_h$ and
$\psi_h \circ \text{Ad} (v_1 v'_1) = \theta_h$ for all $h \le m$, and such that
$\lVert u'_t - 1 \rVert < \epsilon/8$, $\lVert v'_t - 1 \rVert < \epsilon/8$ for all $t \in [0,1]$.
Then $(u_t u'_t v'^*_t v^*_t)_{t \in [0,1]}$ is the desired path.

In order to build a path of unitaries satisfying item \ref{lemma:i3b} of the claim, the first part of the proof is analogous to what we just did.
Instead of using item \ref{l:i3a} corollary \ref{corollary:B} to find $a_k \in A_{sa}$,
by item \ref{l:i3b} of the same lemma, for every $\tau \in T(A)$, there
 is $a \in A_{+}$ with norm smaller than 9 such that
$\lVert a \rVert_{2, \tau} < (\epsilon')^2/9M$ which moreover
agrees with $q_h$ on $S_h = \text{span} \set{\pi(b^*_j)\xi_h, \pi(b^*_j)\zeta_h,
: j \le M}$ well enough so that $\pi(\exp{(i \pi \sum_j b_j a b^*_j)}) \xi_h \approx_{\epsilon''/2} \zeta_h$
for every $h \le m$.
Let $\overline{a}$ be $\sum_j b_j a b^*_j$ and, for $t \in [0,1]$, $u_t = \exp{(it\pi \overline{a})}$.
This implies $\lVert \phi_h \circ \text{Ad}(u_1) - \theta_h \rVert < \epsilon''$.
For all $b \in F$ we have
\[
\lVert [u_t, b] \rVert \le e^{\pi \lVert \overline{a} \rVert} \lVert [\overline{a}, b] \rVert
\le \epsilon /4.
\]
Notice that $\overline{a}$ is positive with norm smaller than $9$.
This implies that, for $\tau \in T(A)$,
\[
\lVert \overline{a} \rVert_{2,\tau}^2 \le
9 \tau(\overline{a})
= 9 \sum_{j \le M} \tau (b_j a b^*_j) = 9 \sum_{j \le M} \tau ( a b^*_jb_j) \le
\]
\[
 \le 9 \sum_{j \le M} \lVert a \rVert_{2, \tau} \lVert b_j^* b_j \rVert_{2,\tau} \le
 9 M \lVert a \rVert_{2, \tau}
\]
Since $\lVert \overline{a} \rVert_{2,\tau} \le \epsilon'$, it follows that $\lVert u_t -1 \rVert_{2,\tau}
\le \epsilon/4$ for every $\tau \in T(A)$.
Perform the same construction between $(\theta_h)_{h \le m}$ and $(\psi_h)_{h \le m}$
to find a path of unitaries $(v_t)_{t \in [0,1]}$ such that
$\lVert \psi_h \circ \text{Ad}(v_1) - \theta_h \rVert < \epsilon''$ for all $h \le m$, 
$\lVert [v_t, b] \rVert  \le \epsilon /4$ for all $t \in [0,1]$ and $b \in F$ and finally such that
$\lVert v_t -1 \rVert_{2,\tau} \le \epsilon/4$ for all $t \in [0,1]$ and $\tau \in T(A)$.
The argument sketched at the
beginning of the proof allows to find two paths of untaires $(u'_t)_{t \in [0,1]}$, $(v'_t)_{t \in [0,1]}$
such that $\phi_h \circ \text{Ad} (u_1 u'_1) = \theta_h$ and
$\psi_h \circ \text{Ad} (v_1 v'_1) = \theta_h$ for all $h \le m$, and such that
$\lVert u'_t - 1 \rVert < \epsilon/8$, $\lVert v'_t - 1 \rVert < \epsilon/8$ for all $t \in [0,1]$.
Then $(u_t u'_t v'^*_t v^*_t)_{t \in [0,1]}$ is the desired path.

\end{proof}

\subsection{Proof of lemma \ref{lemma:1+}}
\begin{proof}[Proof of lemma \ref{lemma:1+}]
We start with the following claim.
\begin{claim} \label{claim:1+}
For every $F' \Subset A$, $\gamma > 0$ and $k' \in \N \setminus \set{0}$ there is $a \in A^1$ and a path
of unitaries $(w_t)_{t \in [0,1]}$ such that
\begin{enumerate}[a.]
\item $w_0 = 1$
\item $ \lVert [b,a] \rVert < \gamma$ and $\max_{t \in [0,1]} \lVert [w_t, b] \rVert < \gamma$ for
all $b \in F'$,
\item $\lVert \text{Ad}(w^{*k'}_1)(a) - a \rVert_{2, \tau} > 3/4$ for all $\tau \in T(A)$.
\end{enumerate}
\end{claim}
\begin{proof}
Let $a' \in M_2(\C)$ be unitary whose trace is zero. Considering the isomorphism $M_{2^\infty} \cong
\bigotimes_{n \in \mathbb{Z}} M_2(\C)$, let $a \in M_{2^\infty}$ the element corresponding to the
tensor
\[
\dots 1 \otimes 1 \otimes a' \otimes  1 \otimes 1 \dots
\]
where $a'$ appears in the coordinate corresponding to zero. Let $\beta \in \text{Aut}(\bigotimes_{n \in \mathbb{Z}} M_2(\C))$ be the automorphism sending the $n$-th copy of $M_2(\C)$ to the
$n+1$-th copy. A direct computation shows that, for $l \in \mathbb{Z} \setminus \set{0}$
\[
\lVert \beta^l(a) - a \rVert_{2, \tau_{M_{2^\infty}}} = \sqrt{2},
\]
where $\lVert \cdot \rVert_{2, \tau_{M_{2^\infty}}}$ is the $\ell_2$-norm induced by $\tau_{M_{2^\infty}}$,
the unique trace of $M_{2^\infty}$.
Since all automorphisms in the \textsf{CAR} algebra are asymptotically inner with
a path starting from the identity (see \cite[Exercise 7.7.5]{blackk}),
there exists a path of unitaries $(w_t)_{t \in [0,1]}$ in $M_{2^\infty}$ such that $w_0 = 1$ and
\[
\lVert \text{Ad}(w^{*k'}_1)(a) - a \rVert_{2, \tau_{M_{2^\infty}}} > 1.
\]
Moreover, since $M_{2^\infty}$ is \textsf{UHF}, we can assume that there is a self-adjont
$d \in M_{2^\infty}$ such that $w_t = \exp{(i td)}$ for all $t \in [0,1]$. By hypothesis and remark
\ref{naim:remark} there exists a copy of $M_{2^{\infty}}$ into $A^{\mathcal{U}} \cap A'$,
hence we can identify $d$ and $a$ with elements $(d_n)_{n \in \N}$ and
$(a_n)_{n \in \N}$ of $A^{\mathcal{U}} \cap A'$. Let $w_{t,n} = \exp(itd_n)$ and, without loss of generality,
assume that $\lVert a_n \rVert = 1$ for all
$n \in \N$. Since $[0,1]$ is compact there exists
$X \in \mathcal{U}$ such that $\lVert [a_n, b] \rVert < \gamma$ and $\max_{t \in [0,1]} \lVert
[w_{t,n}, b] \rVert < \gamma$ for all $n \in X$ and all $b \in F'$.
We claim that there is $n \in X$ such that, for all $\tau \in T(A)$
\[
\lVert \text{Ad}(w_{1,n}^{*k'})(a_n) - a_n \rVert_{2, \tau} > 3/4.
\]
Suppose this is not the case, then for every $n \in X$ we can find a $\tilde{\tau}_n$ for which the inequality
above is false. Let $\vec{\tau} = (\tau_n)_{n \in \N}$ be a trace on $A^{\mathcal{U}}$ such that
$\tau_n = \tilde{\tau}_n$ for all $n \in X$. On the one hand we have that
\[
\lVert \text{Ad}(w^{*k'}_1)(a) - a \rVert_{2, \vec{\tau}} \le 3/4.
\]
On the other, since all traces of $A^{\mathcal{U}}$ restrict to
$\tau_{M_{2^\infty}}$ on $M_{2^\infty}$, we have that
\[
\lVert \text{Ad}(w^{*k'}_1)(a) - a \rVert_{2, \vec{\tau}} = \lVert \text{Ad}(w^{*k'}_1)(a) - a \rVert_{2, \tau_{M_{2^\infty}}} > 3/4,
\]
which is a contradiction.
\end{proof}
Given $F \Subset A$ and $\epsilon > 0$, let $G \Subset A$ and $\delta > 0$ be given
by lemma \ref{lemma:2+}.
Let  $(\psi_h)_{h \le m}$ be inequivalent pure states such that $\psi_h \approx_{G, \delta} \phi_h$ for
all $1 \le h \le m$, and fix $\sigma \in \mathcal{U}(A)$, $k \in \N\setminus \set{0}$, $K \Subset A$ and $\epsilon' > 0$. By lemma \ref{lemma:2+} there
exists a path of unitaries $(v_t)_{t \in [0,1]}$ such that
\begin{enumerate}[i.]
\item $v_0 = 1$,
\item $\phi_h \circ \text{Ad}(v_1) \approx_{K, \epsilon'} \psi_h$ for all $1 \le h \le m$,
\item $\lVert b - \text{Ad}(v_t)(b) \rVert < \epsilon$ for all $b \in F$.
\pushcounter
\end{enumerate}
Apply claim \ref{claim:1+} to $F \cup K \cup \set{\sigma, v_1}$, $k$ and $\gamma > 0 $ small
enough to find $a \in A^1$ and a path of unitaries $(w_t)_{t \in [0,1]}$ such that
\begin{enumerate}[i.]
\popcounter
\item $w_0 = 1$,
\item $\phi_h \circ \text{Ad}(v_1w_1) \approx_{K, \epsilon'} \psi_h$ for all $1 \le h \le m$,
\item $\lVert b - \text{Ad}(v_tw_t)(b) \rVert < \epsilon$ for all $b \in F$,
\item $\lVert \text{Ad}(\sigma)(a) - \text{Ad}((v_1w_1)^{*k})(a) \rVert_{2, \tau} \ge
\lVert a - \text{Ad}(w_1^{*k})(a) \rVert_{2, \tau} - f(\gamma) > 1/2$ for all $\tau \in T(A)$.
\end{enumerate}
\end{proof}

\section{A variant of Kishimoto-Ozawa-Sakai theorem} \label{sctn:4}
In this section we prove theorem \ref{thrm:a}. We split the proof in two parts, the first for
clause \ref{ta:i1}, the second for clause \ref{ta:i2}.

\begin{proof}[Proof of theorem \ref{thrm:a} - part \ref{ta:i1}]
Fix a dense $\set{a_i}_{i \in \N}$ in $A$ and a dense $\set{\sigma_j}_{j \in \N}$ in $\mathcal{U}(A)$. The construction proceeds by induction on
the couples $(j,k) \in \N \times \N$. These two indices keep track of the fact that
we want to build an automorphism $\alpha$ such that
 for every $\tau \in T(A)$ the $k$-th power of
its extension $\alpha^k_{\tau}$ to $\pi_{\tau}[A]''$ is far away from all $\text{Ad}(\sigma_j)$
in the $\ell_2$-norm induced by $\tau$. Let
$\preceq$ be any well-ordering of the couples $\N \times \N$,
and assume that the first elements of such ordering
are $(1,1) \prec (2,1)$.
We shall present in detail step 1 and 2 of the construction, then the generic $n$-th step.
\begin{description}
\item[Step 1]
\begin{itemize}
\item[\emph{a1)}]
By assumption, there is a copy of $M_2(\C)$ in $A^{\mathcal{U}} \cap A'$ for some
free ultrafilter $\mathcal{U}$. Let $c_1, d_1 \in A^{\le 1}$ be two elements given
by clause \ref{ni2} of proposition \ref{prop:comm} for the finite set $\set{a_1}$ and
$\epsilon_1 = 2^{-6}$.
Apply lemma \ref{lemma:1+} to $\phi_1$ for $F_1 = \set{a_1, c_1, d_1}$ and
$\epsilon_1 = 2^{-6}$ to find a $G_1 \Subset A$ and
$\delta_1 >0$
which satisfy the thesis of the lemma.
\item[\emph{b1)}] Fix $\tilde{\psi}_1 \sim \psi_1$ such that $\tilde{\psi}_1
\approx_{G_1, \delta_1} \phi_1$.
\item[\emph{a2)}] Apply lemma \ref{lemma:2+} to $\tilde{\psi}_1$ for $F_1'=F_1$, $\epsilon_1$ to find a $G'_1 \Subset A$ and $\delta'_1> 0$ which
satisfy the thesis of the lemma (specifically satisfying item \ref{item4b}).
\item[\emph{b2)}] Fix $\sigma= \sigma_1$, $k=1$, $K = G'_1 \cup F'_1$ and $\epsilon' = \min \set{\delta'_1, 1/2}$, and let $(v_{1,t})_{
t \in [0,1]}$ be a path of unitaries in $A$ and $b_{1,1} \in A^1$ given by the application of lemma \ref{lemma:1+} in
part \emph{a1} such that (we will denote $v_{1,1}$ simply by $v_1$):
\begin{enumerate}[(i)]
\item $v_{1,0} = 1$,
\item $\phi_1 \circ \text{Ad} (v_1) \approx_{K, \epsilon'} \tilde{\psi}_1$,
\item $\lVert b - \text{Ad}(v_{1,t})(b) \rVert < \epsilon_1$ for all $b \in F_1$,
\item $\lVert \text{Ad} (\sigma_1) ( b_{1,1}) - \text{Ad} (v^*_1) (b_{1,1}) \rVert_{2,\tau} > 1/4$
for all $\tau \in T(A)$.
\end{enumerate}
\end{itemize}
\item[Step 2]
\begin{itemize}
\item[\emph{a1)}]
Let $c_2, d_2 \in A^{\le 1}$ be two elements given
by clause \ref{ni2} of proposition \ref{prop:comm} for the set $F_1' \cup \set{v^*_1}$ and
$\epsilon_2 = 2^{-7}$.
Apply lemma \ref{lemma:1+}
to $\phi_1 \circ \text{Ad} (v_1)$ for $F_2 =F'_1 \cup \set{a_i, c_i, d_i, \text{Ad}(v^*_1)
(a_i) : i \le 2}\cup \set{b_{1,1}, v_1^*}$, and $\epsilon_2 = 2^{-7}$
to find a $G_2 \Subset A$ and $\delta_2 > 0 $
which satisfy the thesis of the lemma.
\item[\emph{b1)}] \label{nb1} Fix $K = G_2 \cup F_2$ and $\epsilon' = \min \set{\delta_2, 1/4}$,
and let $(w_{1,t})_{
t \in [0,1]}$ be a path of unitaries in $A$ given by the application of lemma \ref{lemma:2+} in
part \emph{a2} of the previous step such that (we will denote $w_{1,1}$ simply by $w_1$):
\begin{enumerate}[(i)]
\item $w_{1,0} = 1$,
\item $\phi_1 \circ \text{Ad} (v_1) \approx_{K, \epsilon'} \tilde{\psi}_1 \circ \text{Ad} (w_1)$,
\item $\lVert b - \text{Ad}(w_{1,t})(b) \rVert < \epsilon_1$ for all $b \in F'_1$,
\item $\lVert w_1 - 1 \rVert_{2,\tau} < \epsilon_1$ for all $\tau \in T(A)$.
\end{enumerate}
Let $u_1$ be equal to $w_1 v^*_1$. We have, for $\tau \in T(A)$,
\begin{align*}
\lVert \text{Ad} (\sigma_1)(b_{1,1}) - \text{Ad} (u_1) (b_{1,1}) \rVert_{2,\tau} 
&\stackrel{(\text{2.b1.iv})}{\ge} \lVert \text{Ad} (\sigma_1)(b_{1,1}) -
\text{Ad} (v^*_1) (b_{1,1}) \rVert_{2,\tau} - 2^{-5}\\ &\stackrel{(\text{1.b2.iv})}{>} 1/8.
\end{align*}
Moreover, we have, for $x_1 \in \set{c_1, d_1}$
\[
\lVert \text{Ad}(u_1)(x_1) - x_1 \rVert_{2, \tau} \stackrel{(\text{1.b2.iii})}{\le}
\lVert \text{Ad}(w_1)(x_1) - x_1 \rVert_{2, \tau}
+ \epsilon_1 \stackrel{(\text{2.b1.iii})}{\le} 2 \epsilon_1.
\]
Conclude by fixing $\tilde{\psi}_2 \sim \psi_2$ such that $\phi_2 \circ \text{Ad} (v_1)
\approx_{K, \epsilon'}  \tilde{\psi}_2 \circ \text{Ad} (w_1)$.
\item[\emph{a2)}] Apply lemma \ref{lemma:2+} to $(\tilde{\psi}_1\circ \text{Ad}(w_1),
\tilde{\psi}_2 \circ \text{Ad}(w_1))$ for
$F_2' = F_2 \cup \set{\text{Ad}(w_1^*)(a_i) : i \le 2}$, $\epsilon_2$
to find a $G'_2 \Subset A$ and $\delta'_2 >0$
which satisfy the thesis of the lemma (with item \ref{item4b}).
\item[\emph{b2)}] Fix $\sigma = v_1 \sigma_2$, $k=1$,
$K = G'_2 \cup F'_2$, $\epsilon' = \min \set{\delta'_2, 1/4}$, and let $(v_{2,t})_{
t \in [0,1]}$ be a path of unitaries in $A$ and $b_{2,1} \in A^1$ given by the application of lemma \ref{lemma:1+} in
part \emph{a1} such that (we will denote $v_{2,1}$ simply by $v_2$)
\begin{enumerate}[(i)]
\item $v_{2,0} = 1$,
\item $\phi_h \circ \text{Ad} (v_1 v_2) \approx_{K, \epsilon'} \tilde{\psi}_h \circ \text{Ad} (w_1)$
for $h \le 2$,
\item $\lVert b - \text{Ad}(v_{2,t})(b) \rVert < \epsilon_2$ for all $b \in F_2$,
\item $\lVert \text{Ad} (v_1\sigma_2) ( b_{2,1}) - \text{Ad} (v^*_2) (b_{2,1}) \rVert_{2,\tau} > 1/4$ for all $\tau \in T(A)$.
\end{enumerate}
\end{itemize}
\end{description}
Assume $(j',k')$ is the $n$-th
element of the ordering induced on $\N \times \N$ by $\prec$. We define
$L = \max \set{k : (j,k) \preceq (j',k')}$.
\begin{description}
\item[Step n]
\begin{itemize}
\item[\emph{a1)}]
Let $c_n, d_n \in A^{\le 1}$ be two elements given
by clause \ref{ni2} of proposition \ref{prop:comm} for the set $F_{n-1}'
\cup \set{u_{n-2}v^*_{n-1}}$ and $\epsilon_n = 2^{-(5+n)} / (4L(L+1)^2)$.
Apply lemma \ref{lemma:1+} to $(\phi_h\circ \text{Ad}
(v_1 \dots v_{n-1}))_{h \le n}$ for $F_n = F'_{n-1} \cup
\set{a_i, c_i, d_i, \allowbreak \text{Ad}(v^*_{n-1}  \dots v^*_1)
(a_i) : i \le n}\cup \set{b_{j,k}: (j,k) \prec (j',k')} \cup \set{ u_{n-2}v_{n-1}^* }$ and
$\epsilon_n = 2^{-(5+n)} / (4L(L+1)^2)$ to find a $G_n \Subset A$ and $\delta_n > 0 $
which satisfy the thesis of the lemma.
\item[\emph{b1)}] Fix $K = G_n \cup F_n$ and $\epsilon' = \min \set{\delta_n, 2^{-n}}$,
and let $(w_{n-1,t})_{
t \in [0,1]}$ be a path of unitaries in $A$ given by the application of lemma \ref{lemma:2+} in
part \emph{a2} of the previous step such that (we will denote $w_{n-1,1}$ simply by $w_{n-1}$):
\begin{enumerate}[(i)]
\item $w_{n-1,0} = 1$,
\item $\phi_h \circ \text{Ad} (v_1 \dots v_{n-1}) \approx_{K, \epsilon'}
\tilde{\psi}_h \circ \text{Ad} (w_1 \dots w_{n-1})$ for $h \le n-1$,
\item $\lVert b - \text{Ad}(w_{n-1})(b) \rVert < \epsilon_{n-1}$ for all $b \in F'_{n-1}$,
\item $\lVert w_{n-1} - 1 \rVert_{2,\tau} < \epsilon_{n-1}$ for all $\tau \le T(A)$.
\end{enumerate}
Let $u_{n-1}$ be equal to $u_{n-2}w_{n-1}v^*_{n-1}$. For every $(j, k) \prec (j',k')$ and $\tau \in
T(A)$
we have, assuming that $(j,k)$ corresponds to the $N$-th element of the well-ordering
$\prec$:
\begin{multline*}
\lVert \text{Ad} (\sigma_j)(b_{j,k}) - \text{Ad}(u_{n-1}^k)(b_{j,k}) \rVert_{2,\tau}
\stackrel{b_{j,k}, u_{N-2}v_{N-1}^* \in F_{N+1}, (\text{N--N+1.b1.iv})}{\ge}
\\
\ge  \lVert \text{Ad} (\sigma_j) (b_{j,k}) - \text{Ad}((u_{N-2}
v^*_{N-1}v^*_{N})^k)
(b_{j,k}) \rVert_{2,\tau} - 2^{-4} \stackrel{u_{N-2}v_{N-1}^* \in F_{N}}{\ge}
\\
\ge \lVert \text{Ad}( (u^*_{N-2}v_{N-1})^k\sigma_j)(b_{j,k}) -
\text{Ad}(v^{*k}_N)(b_{j,k}) \rVert_{2,\tau} - 2^{-3} \stackrel{(\text{N.b2.iv})}{>} 1/8.
\end{multline*}
We also have, for $i < n$, $\tau \in \mathcal{T(A)}$ and $x_i \in \set{c_i, d_i}$
\[
\lVert \text{Ad}(u_{n-1})(x_i) - x_i \rVert_{2, \tau}\stackrel{x_i \in F_i, (\text{i-1.b1.iv})}{\le}
\lVert \text{Ad}(u_{i-2}v^*_{i-1})(x_i) - x_i \rVert_{2, \tau} + \epsilon_{i-3} \stackrel{(\text{i.a1})}{<}
\epsilon_{i-4}.
\]
Conclude by fixing $\tilde{\psi}_n \sim \psi_n$ such that $\phi_n \circ \text{Ad} (v_1 \dots v_{n-1})
\approx_{K, \epsilon'}  \tilde{\psi}_n \circ \text{Ad} (w_1  \dots \\ w_{n-1})$.
\item[\emph{a2)}] Apply lemma \ref{lemma:2+} to
$(\tilde{\psi}_h \circ \text{Ad} (w_1 \dots w_{n-1}))_{h \le n}$ for
$F'_n = F_n \cup   \set{\text{Ad}(w^*_{n-1}\dots \allowbreak w^*_1)
(a_i) : i \le n}$, $\epsilon_n$ to find a $G'_n \Subset A$ and $\delta'_n >0$
which satisfy the thesis of the lemma (with item \ref{item4b}).
\item[\emph{b2)}] \label{naim:b2}
Fix $\sigma = (u^*_{n-2}v_{n-1})^{k'} \sigma_{j'}$, $k=k'$,
$K = G'_n \cup F'_n$, $\epsilon' = \min \set{\delta'_n, 2^{-n}}$,
and let $(v_{n,t})_{
t \in [0,1]}$ be a path of unitaries in $A$ given by the application of lemma \ref{lemma:1+} in
part \emph{a1} such that (we will denote $v_{n,1}$ simply by $v_n$):
\begin{enumerate}[(i)]
\item $v_{n,0} = 1$,
\item $\phi_h \circ \text{Ad} (v_1 \dots v_n) \approx_{K, \epsilon'} \tilde{\psi}_h \circ \text{Ad} (w_1 \dots w_{n-1})$
for $h \le n$,
\item $\lVert b - \text{Ad}(v_{n,t})(b) \rVert < \epsilon_n$ for all $b \in F_n$.
\item $\lVert \text{Ad}((u^*_{n-2}v_{n-1})^{k'} \sigma_{j'}) ( b_{k',j'})
- \text{Ad} (v^{*k'}_n) (b_{k',j'}) \rVert_{2,\tau} > 1/4$ for all $\tau \in T(A)$.
\end{enumerate}

\end{itemize}
\end{description}
The proof that $\Phi$ and $\Psi$, defined respectively as the pointwise limits
of $\set{\text{Ad}(v_n)}_{n \in \N}$ and $\set{\text{Ad}(w_n)}_{n \in \N}$,
are two automorphisms of $A$ such that $\phi_h \circ \Phi \sim \psi_h \circ \Psi$
for all $h \in \N$ is as in theorem 2.1 of \cite{KOS}.
Suppose now that $\alpha = \Psi \circ \Phi^{-1}$, and that $\alpha^k$ is a
$\tau$-weakly inner automorphism for some
$k \in \N$ and $\tau \in T(A)$. There exists $\sigma_j$ such that, for all $a \in A^1$
\[
\lVert \text{Ad}(\sigma_j)(a) - \alpha^k(a) \rVert_{2,\tau} \le 1/16.
\]
Let $N \in \N$ be corresponding to the step $(j,k)$ in the induction.
Let $n \in \N$ be bigger than $N$ and such that
$\lVert \text{Ad}(u_n^k)(b_{j,k}) - \alpha^k(b_{j,k}) \rVert_{2,\tau} < 1/16 $.
Hence by construction it follows that
\[
\lVert \text{Ad} (\sigma_j)(b_{j,k}) - \text{Ad}(u_n^k)(b_{j,k}) \rVert_{2,\tau} > 1/8,
\]
which is a contradiction.
Let $B = A \rtimes_\alpha \mathbb{Z}$ and let $u_\alpha \in B$
be the unitary which with $A$ generates $B$ and such that
$u_\alpha a u_\alpha^* = \alpha(a)$ for all $a \in A$. The \cstar-algebra
$B$ is generated by $\set{a_i}_{i \in \N} \cup \set{u_\alpha}$, therefore, in order to
check that $B' \cap B^{\mathcal{U}}$ contains a copy of $M_2(\C)$, it is
enough to prove clause \ref{ni2} of proposition \ref{prop:comm} for finite subsets of
$\set{a_i}_{i \in \N} \cup \set{u_\alpha}$. Let $\epsilon > 0$ and $F \Subset \set{a_i}_{i \in \N}
\cup \set{u_\alpha}$ containing $u_\alpha$. Let $n \in \N$ be big enough so that
$F \setminus \set{u_\alpha} \subseteq F_n$ and $\epsilon_{n-4} < \epsilon$. We have that
\begin{enumerate}
\item $\max \set{ \lVert c_n \lVert, \rVert d_n \rVert} \le 1$,
\item $\max \set{\lVert c_nc_n^*-1 \rVert_{2,u}, \lVert d_nd_n^* - 1
\rVert,\lVert c_n^*c_n-1 \rVert_{2,u},
\lVert d_n^*d_n - 1 \rVert_{2,u}, \lVert c_nd_n + d_nc_n \rVert_{2,u}} < \epsilon_{n-4} < \epsilon$,
\item $\max_{b \in F \setminus \set{u_\alpha}}
\set{\lVert [c_n,b] \rVert_{2,u} , \lVert [d_n,b] \rVert_{2,u}} < \epsilon_{n-4} < \epsilon$.
\end{enumerate}
Moreover, by part \emph{b1} of each step $m \ge n$ of the construction, we infer that,
for every $m \ge n$ and $x_n \in \set{c_n, d_n}$
\[
\lVert \text{Ad}(u_m)(x_n) - x_n \rVert_{2,u} < \epsilon_{n-4} < \epsilon.
\]
Therefore, we conclude that
\[
\lVert u_\alpha x_n u_\alpha^* - x_n \rVert_{2, \tau} = \lVert \alpha(x_n) - x_n \rVert_{2, \tau} =
\lim_{m \to \infty} \lVert \text{Ad}(u_m)(x_n) - x_n \rVert_{2,\tau} < \epsilon.
\]
\end{proof}

\begin{proof}[Proof of theorem \ref{thrm:a} - part \ref{ta:i2}]
Fix a dense $\set{a_i}_{i \in \N}$ in $A$.
\begin{description}
\item[Step 1.]
\begin{itemize}
\item[\emph{a1)}] Apply lemma \ref{lemma:2+} to $\phi_1$ for $F_1 = \set{a_1}$,
$\epsilon_1 = 2^{-1}$, $\set{\tau_1}$, to find a finite $G_1 \subset A$ and
$\delta_1 >0$
which satisfy the thesis of the lemma (with item \ref{item4a}).
\item[\emph{b1)}] Fix $\tilde{\psi}_1 \sim \psi_1$ such that $\tilde{\psi}_1 \approx_{G, \delta} \phi_1$.
\item[\emph{a2)}] Apply lemma \ref{lemma:2+} to $\tilde{\psi}_1$ for $F'_1=F_1$,
$\epsilon_1$,
$\set{\tau_1}$, to find a finite $G'_1 \subset A$ and $\delta'_1> 0$
which satisfy the thesis of the lemma (with item \ref{item4a}).
\item[\emph{b2)}] Fix $K = G'_1 \cup F'_1$ and $\epsilon' = \min \set{\delta'_1, 1/2}$, and let $(v_{1,t})_{
t \in [0,1]}$ be a path of unitaries in $A$ given by the application of lemma \ref{lemma:2+} in
part \emph{a1} such that (we will denote $v_{1,1}$ simply by $v_1$):
\begin{itemize}
\item $v_{1,0} = 1$,
\item $\phi_1 \circ \text{Ad} (v_1) \approx_{K, \epsilon'} \tilde{\psi}_1$,
\item $\lVert b - \text{Ad}(v_{1,t})(b) \rVert < \epsilon_1$ for all $b \in F_1$,
\item $\lVert v_1 - 1 \rVert_{2,1} < \epsilon_1$.
\end{itemize}
\end{itemize}

\item[Step n.]
\begin{itemize}
\item[\emph{a1)}] Apply lemma \ref{lemma:2+} to $(\phi_h \circ \text{Ad} (v_1 \dots
v_{n-1}))_{h \le n}$ for $F_n = F'_{n-1} \cup
\set{a_i, \allowbreak \text{Ad}(v^*_{n-1}\dots v^*_1)
(a_i) : i \le n}$, $\epsilon_n = 2^{-n}$, $\set{\tau_1, \dots, \tau_n}$
to find a finite $G_n \subset A$ and $\delta_n > 0$
which satisfy the thesis of the lemma (with item \ref{item4a}).
\item[\emph{b1)}] Fix $K = G_n \cup F_n$ and $\epsilon' = \min \set{\delta_n, 2^{-n}}$,
and let $(w_{n-1,t})_{
t \in [0,1]}$ be a path of unitaries in $A$ given by the application of lemma \ref{lemma:2+} in
part \emph{a2} of the previous step such that (we will denote $w_{n-1,1}$ simply by $w_{n-1}$):
\begin{itemize}
\item $w_{n-1,0} = 1$,
\item $\phi_h \circ \text{Ad} (v_1 \dots v_{n-1}) \approx_{K, \epsilon'}
\tilde{\psi}_h \circ \text{Ad} (w_1 \dots w_{n-1})$ for $h \le n-1$,
\item $\lVert b - \text{Ad}(w_{n-1})(b) \rVert < \epsilon_{n-1}$ for all $b \in F'_{n-1}$,
\item $\lVert w_{n-1} - 1 \rVert_{2,k} < \epsilon_{n-1}$ for all $k \le n-1$.
\end{itemize}
\item[\emph{a2)}] Apply lemma \ref{lemma:2+} to
$(\tilde{\psi}_h \circ \text{Ad} (w_1 \dots w_{n-1}))_{h \le n}$ for
$F'_n = F_n \cup \set{\text{Ad}(w^*_{n-1}\dots \allowbreak w^*_1)
(a_i) : i \le n}$, $\epsilon_n$, $\set{\tau_1, \dots, \tau_n}$
to find a finite $G'_n \subset A$ and $\delta'_n >0$
which satisfy the thesis of the lemma (with item \ref{item4a}).
\item[\emph{b2)}] Fix $K = G'_n \cup F'_n$ and $\epsilon' = \min \set{\delta'_n, 2^{-n}}$,
and let $(v_{n,t})_{
t \in [0,1]}$ be a path of unitaries in $A$ given by the application of lemma \ref{lemma:2+} in
part \emph{a1} such that (we will denote $v_{n,1}$ simply by $v_n$):
\begin{itemize}
\item $v_{n,0} = 1$,
\item $\phi_h \circ \text{Ad} (v_1 \dots v_n) \approx_{K, \epsilon'} \tilde{\psi}_h \circ \text{Ad} (w_1 \dots w_{n-1})$
for $h \le n$,
\item $\lVert b - \text{Ad}(v_{n,t})(b) \rVert < \epsilon_n$ for all $b \in F_n$,
\item $\lVert v_n - 1 \rVert_{2,k} < \epsilon_n$ for all $k \le n$.
\end{itemize}

\end{itemize}

\end{description}
The proof that the maps $\Phi$ and $\Psi$, defined respectively as the pointwise limits
of $\set{\text{Ad}(v_n)}_{n \in \N}$ and $\set{\text{Ad}(w_n)}_{n \in \N}$,
are two automorphisms of $A$ such that $\phi_h \circ \Phi \sim \psi_h \circ \Psi$
for all $h \in \N$ is as in theorem 2.1 in \cite{KOS}.
If we let $u_t = w_t v^*_t$, then the path of unitaries $(u_t )_{t \in [0, \infty)}$ is such
that $\alpha(a) = \lim_{t \to \infty} \text{Ad} (u_t) (a) $ for all $a \in A$ is the required automorphism.
By construction, for each $n \in \N$ and all $k \le n$ we have that
\[
\lVert u_{n+1} - u_n \rVert_{2,k} = \lVert u_{n+1} u^*_n -1 \rVert _{2,k} =
\lVert w_{n+1} v^*_{n+1} -1 \rVert_{2,k} < 2^{-(n -1)}
\]
Thus, given any $\tau \in \set{\tau_k}_{k \in \N}$,
the sequence $\set{\pi_\tau(u_n)}_{n \in \N}$ is strongly convergent on $B(H_\tau)$
(recall that the strong convergence of $\set{\pi_\tau(u_n)}_{n\in \N}$ is equivalent
to the convergence of $\set{u_n}_{n\in \N}$ in the $\ell_2$-norm induced by $\tau$).
Let $v$ be its strong limit. Then $ \text{Ad}(v)$ extends $\alpha$, in fact for every
$a,x,y \in A$ and $\epsilon > 0$, for $n \in \N$ big enough the following holds
\[
\langle v \pi_\tau(a) v^*x, y \rangle_\tau = \langle \pi_\tau(a)v^*x,v^*y \rangle_\tau
\approx_\epsilon \langle \pi_\tau(a u_n^*)x, \pi_\tau(u_n^*)y \rangle_\tau =
\]
\[
= \langle \pi_\tau(u_n a u_n^*) x, y \rangle_\tau
\approx_\epsilon \langle \pi_\tau(\alpha(a))x, y \rangle_\tau
\]
The argument extends by density to all $x,y \in H_\tau$ and all $a \in \pi_\tau(A)''$.

\end{proof}

\section{Conclusions and final remarks} \label{sctn:5}
Going back to the main motivation of our inquiry, namely understanding what counterexamples
to Naimark's problem look like and how they could be characterized,
we are still not able to say anything more
that such algebras have to be nonseparable, simple and non-type I. The results we proved
actually show that the tracial simplex of a counterexample
to Naimark's problem has no specific property, at least when it is separable.
On the other hand, theorem \ref{thrm:1} provides a wide variety of counterexamples, and it
highlights the versatility of the techniques in \cite{KOS} and \cite{AW}. It would be interesting
to know how further this versatility can be pushed, to see for instance
if it is possible to obtain any (nonseparable) Choquet simplex
as the trace space of a counterexample to Naimark's problem, or
if there is any K-theoretic or model theoretic obstruction to being a counterexample
to Naimark's problem.

Another interesting topic (already mentioned in the introduction of \cite{farahilan})
is the existence of a counterexample to Naimark's problem
with an outer automorphism. This problem is related to the following theorem.
\begin{theorem}[{\cite[Theorem 2.1]{outer}}] \label{thrm:out}
Let $A$ be a separable simple \cstar-algebra and $\alpha \in \text{Out}(A)$.
Then there exist two inequivalent pure states $\phi, \psi \in P(A)$ such that
$\phi = \psi \circ \alpha$
\end{theorem}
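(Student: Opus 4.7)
The theorem is equivalent to: if $\alpha \in \text{Aut}(A)$ satisfies $\psi \circ \alpha \sim \psi$ for every pure state $\psi$, then $\alpha$ is inner. Indeed, the conclusion asks for some $\psi \in P(A)$ with $\psi \circ \alpha \not\sim \psi$, at which point one simply sets $\phi := \psi \circ \alpha$. I would therefore prove the contrapositive: assuming $\psi\circ\alpha\sim\psi$ for every $\psi\in P(A)$, I would produce a unitary in $M(A)$ implementing $\alpha$, contradicting $\alpha\in\text{Out}(A)$.

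First, using that $A$ is simple, I would fix a single faithful irreducible representation $\pi : A \to B(H)$ (take $\pi=\pi_\psi$ for any pure state $\psi$). By the hypothesis of the contrapositive, $\pi\circ\alpha$ is unitarily equivalent to $\pi$, so there exists $U \in \mathcal{U}(B(H))$ with $\pi(\alpha(a)) = U\pi(a)U^{*}$ for every $a\in A$. In particular, $U$ normalizes $\pi(A)$, and the content of the theorem reduces to showing that $U$ can in fact be chosen inside $\pi(M(A))$.

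For this upgrade from a ``spatial'' unitary to a multiplier, I would run a diagonal argument in the spirit of section \ref{sctn:4}, leveraging separability of $A$. Fix countable norm-dense sets $(a_n)\subseteq A$ and $(\xi_n)\subseteq H$. At step $n$, apply the Glimm--Kadison transitivity theorem to produce $v_n \in \mathcal{U}(A)$ such that $\pi(v_n)$ agrees with $U$ on $\{\xi_1,\dots,\xi_n\}$ up to $1/n$ and $\lVert\text{Ad}(v_n)(a_k) - \alpha(a_k)\rVert < 1/n$ for all $k\le n$. This is possible precisely because $U$ normalizes $\pi(A)$, so the finite configuration we wish to match already lies inside $\pi(A)$. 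The sequence $(\text{Ad}(v_n))$ then converges pointwise in norm to $\alpha$ on a dense subset of $A$, hence on all of $A$, so $\alpha$ is approximately inner, with the implementing unitaries all aligned with the single target $U$.

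The main obstacle is the passage from this approximate innerness to genuine innerness: a priori, a weak*-cluster point $u\in A^{**}$ of $(v_n)$ only implements $\alpha$ on the universal Hilbert space, not on $A$ itself. The way around it is that the sequence was built so that $v_n a v_n^{*}\to\alpha(a)$ in \emph{norm} for every $a\in A$; this norm convergence, and not merely weak*-convergence, forces $uAu^{*}\subseteq A$, hence $u\in M(A)$, yielding $\alpha=\text{Ad}(u)$. If this closing step resists direct treatment, an alternative route is via the crossed product $A\rtimes_\alpha\mathbb{Z}$, which is simple when $\alpha$ is outer by Kishimoto's theorem; one then extracts the required pure state $\psi$ of $A$ from a pure state of $A\rtimes_\alpha\mathbb{Z}$ by analyzing the decomposition of its restriction to $A$ and observing that the failure of innerness of $\alpha$ witnesses an irreducible summand of this restriction that is not equivalent to its image under $\alpha$.
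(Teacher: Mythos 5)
This theorem is imported verbatim from Kishimoto's work (\cite[Theorem 2.1]{outer}); the paper gives no proof of it, so your proposal can only be judged on its own terms, and it has two fatal gaps. The contrapositive reduction is fine, but the diagonal step is not: the Glimm--Kadison transitivity theorem produces an element of $A$ matching a prescribed operator on a fixed finite-dimensional subspace, i.e.\ it controls $\pi(v_n)$ on finitely many vectors, and this never yields the \emph{norm} estimate $\lVert \mathrm{Ad}(v_n)(a_k)-\alpha(a_k)\rVert<1/n$; the remark that ``$U$ normalizes $\pi(A)$'' does not bridge strong-operator control on vectors to norm control of an inner automorphism against $\alpha$. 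Producing such $v_n$ is essentially the assertion that $\alpha$ is approximately inner, which is the hard content of theorems of Kishimoto--Ozawa--Sakai type and cannot come for free. A structural reason your argument must fail: after fixing $\pi=\pi_\psi$ you only ever use the hypothesis $\psi\circ\alpha\sim\psi$ at that single pure state (this is exactly what the existence of the spatial unitary $U$ is equivalent to), but an outer automorphism can perfectly well fix the equivalence class of some pure state --- product-type automorphisms of UHF algebras fix a product state on the nose --- and the theorem only claims that \emph{some} class is moved. No argument consuming only this single-state input can conclude that $\alpha$ is inner.

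Second, even granting a sequence $v_n\in\mathcal{U}(A)$ with $v_nav_n^*\to\alpha(a)$ in norm for all $a$, the closing step is false: approximate innerness does not imply innerness (again, product-type automorphisms of UHF algebras are approximately inner yet outer). A weak*-cluster point $u\in A^{**}$ of $(v_n)$ need not satisfy $uau^*=\alpha(a)$, since multiplication is not jointly weak*-continuous, and in any case $uAu^*\subseteq A$ would not place $u$ in $M(A)$, which requires $uA+Au\subseteq A$. The fallback via the crossed product is closer in spirit to how such results are actually proved, but as stated it misquotes Kishimoto: simplicity of $A\rtimes_{\alpha,r}\mathbb{Z}$ requires \emph{all} nonzero powers of $\alpha$ to be outer, not just $\alpha$ itself, and the extraction of the inequivalent pair from a pure state of the crossed product is left entirely unexplained. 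The actual proof rests on Kishimoto's quantitative analysis of outer automorphisms of separable simple $C^\ast$-algebras (roughly: inside an arbitrary nonzero hereditary subalgebra one finds a positive norm-one element almost orthogonal to its $\alpha$-image, and a pure state excised by that element is then moved to an inequivalent one); none of that machinery appears in your argument.
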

This result
is linked in turn to the following question on inner automorphisms which, to our
knowledge, is open.
\begin{question}
Let $A$ be a unital \cstar-algebra and let $\alpha$ be an automorphism of $A$. Suppose
that, whenever $A$ is embedded in a \cstar-algebra $B$, $\alpha$ extends to an
automorphism of $B$. Is $\alpha$ inner?
\end{question}
The analogous question has a positive answer for the category of groups (see \cite{schupp}),
and an application of theorem \ref{thrm:out} shows that this is also the case for separable simple unital \cstar-algebras.
In fact, let $A$ be a separable simple unital \cstar-algebra and $\alpha \in \text{Out}(A)$.
Suppose $\phi, \psi \in P(A)$ are two inequivalent pure states such that
$\phi = \psi \circ \alpha$. Since $A$ is simple, the GNS representation associated
to $\phi$ provides a map $\pi_\phi: A \to B(H_\phi)$
which is an embedding of $A$ into $B(H_\phi)$.
Identify $A$ with $\pi_\phi(A)$ and suppose
$\alpha$ can be extended to an automorphism of $B(H_\phi)$,
which means that there is $u \in \mathcal{U}(B(H_\phi))$
such that $\text{Ad}(u)\restriction_A = \alpha$. The pure state $\psi$ is thus equal to the
vector state induced by $u\xi_\phi$, therefore an application of the Kadison transitivity theorem
entails that $\phi$ and $\psi$ are unitarily equivalent, which is a contradiction.
A generalization of theorem \ref{thrm:out}
to nonseparable \cstar-algebras would settle
the question also in the nonseparable simple case. However,
a counterexample to Naimark's problem with an outer
automorphism would witness the impossibility of such generalization.

\begin{acknow}
I am grateful to Ilijas Farah for having introduced me to this problem, and
for the countless suggestions he gave me during the early stages of this work
and on the first drafts of this paper. I wish to thank George Elliott for sharing with
me interesting remarks and comments on Naimark's problem, and Georgios
Katsimpas for his useful feedback on the earlier drafts of this paper.
I further thank Alessandro Vignati for raising the question on the tracial simplexes of
counterexamples to Naimark's problem in the first place. I am grateful to the anonymous
referee for their useful suggestions and remarks on the paper, and in particular on the abstract.
I wish to thank G{\'a}bor Szab\'o for pointing out the mistake in the original version of the paper.
Finally, I wish to thank Christopher Schafhauser who, along with G{\'a}bor,
made crucial suggestions in the process of fixing it.
\end{acknow}

\bibliographystyle{amsalpha}
	\bibliography{Bibliography}

\end{document}